\documentclass[11pt,oneside]{article}

\usepackage{color}
\usepackage{graphicx} 
\usepackage[centertags]{amsmath}
\usepackage{hyperref}
\usepackage{amssymb}
\usepackage{amsthm}
\usepackage[english]{babel}
\usepackage[noblocks,affil-sl]{authblk}             
\usepackage{setspace}             
\usepackage{fullpage}            
\vfuzz2pt 
\hfuzz2pt 
%

%
\newtheorem{thm}{Theorem}[section]
\newtheorem{cor}[thm]{Corollary}

\newtheorem{prop}[thm]{Proposition}
\newtheorem{defn}[thm]{Definition}
\newtheorem{rem}[thm]{Remark}

\numberwithin{equation}{section}
\newcommand{\Real}{\mathbb R}

\newcommand{\vareps}{\epsilon}
\newcommand{\To}{\longrightarrow}
\newcommand{\A}{\mathcal{A}}
\newcommand{\punt}{\mathbf{.}}
\begin{document}
\title{On some applications of a symbolic representation of non-centered L\'evy processes}
\author{E. Di Nardo} \affil{Department of Mathematics, Computer Science and Economics, Universit\`a  
della Basilicata, Potenza, Italy}
\author{I. Oliva} \affil{Department of Economics Universit\`a di Verona, Bologna, Italy}
%
\date{\today}
\maketitle
\begin{abstract}
By using a symbolic technique known in the literature as the classical umbral calculus, we characterize two classes 
of polynomials related to L\'evy processes: the Kailath-Segall and the time-space harmonic polynomials. 
We provide the Kailath-Segall formula in terms of cumulants and we recover simple 
closed-forms for several families of polynomials with respect to 
not centered L\'evy processes, such as the Hermite polynomials with the Brownian motion, the Poisson-Charlier 
polynomials with the Poisson processes, the actuarial polynomials with the Gamma processes, the first kind  Meixner polynomials with the Pascal processes, the Bernoulli, Euler and Krawtchuk polynomials with suitable 
random walks.
\end{abstract}
\textsf{\textbf{keywords}: L\'evy process, time-space harmonic polynomial, Kailath-Segall polynomial, cumulant, 
umbral calculus}
%
\section{Introduction}
The umbral calculus is a symbolic method, known in the literature since the XIX century, consisting in a set of 
mathematical tricks, dealing with number sequences, whose subscripts were treated as they were powers. No formal 
setting for this theory was given until 1964, when Gian-Carlo Rota disclosed the \lq\lq umbral magic art\rq\rq 
of lowering and raising exponents, bringing to the light the underlying linear functional \cite{Rota}. From 
1964 on, the umbral calculus was deeply developed. In particular, in 1994, Rota and Taylor \cite{SIAM} provided a 
simple presentation of the umbral calculus in a framework very similar to the theory of random variables and, in 2001, 
Di Nardo and Senato \cite{Dinsen} gave a complete formalization of the matter.

Here, we refer to the classical umbral calculus as a syntax consisting in an alphabet $\A = \{\alpha, \beta, \gamma, 
\ldots\}$ of symbols, called \emph{umbrae}, and a suitable linear functional $E,$ called \emph{evaluation}, which 
resembles the expectation operator in probability theory. Therefore umbrae look like the framework of random variables, 
with no reference to any probability space. The key point of the theory is the idea of associating a unital number sequence 
$1, a_1, a_2, \ldots$ to a sequence $1, \alpha, \alpha^2, \ldots$ of powers of $\alpha$ by means of the evaluation functional. 

In this framework, the notion of summation of umbrae can be extended to the case of a non-integer number of 
addends, thus leading us to a symbolic version of the infinite divisibility property and therefore of 
L\'evy processes \cite{Sato}. 

In 1997, together with Wallstrom \cite{Wall}, Rota conceived a combinatorial definition of stochastic integration in the 
setting of random measures. The starting point is the Kailath-Segall formula \cite{KS} interpreted 
in combinatorial terms and applied to derive recursion relations for some classes of orthogonal polynomials.
The Kailath-Segall formula links the variations $\{X_t^{(n)}\}_{t \geq 0}$ of a L\'evy process
\begin{equation}
X_t^{(1)} = X_t, \,\,\, X_t^{(2)} = [X,X]_t, \,\,\, X_t^{(n)} = \sum_{s \geq t}(\Delta X_s)^n, 
\;\; n \geq 3,
\label{prima}
\end{equation}
to its iterated stochastic integrals
\begin{equation} 
P_t^{(0)} = 1, \,\,\, P_t^{(1)}=X_t, \,\,\, P_t^{(n)}=\int_0^t P_{s-}^{(n-1)} {\rm d}X_s, \;\; n \geq 2
\label{seconda}
\end{equation}
by using suitable polynomials, named the {\it Kailath-Segall polynomials}. In this paper, we give an umbral representation 
of this class of polynomials highlighting the role played by their cumulants. We show that the Kailath-Segall formula 
is a suitable generalization of the well-known formulae giving elementary symmetric polynomials in terms of
power sum symmetric polynomials. 

Cumulants play the same role in the umbral expression of time-space harmonic polynomials with respect 
to not necessarily centered L\'evy processes. A family of polynomials $\{P(x,t)\}_{t \geq 0}$ is said to be time-space harmonic with
respect to a  L\'evy process $\{X_t\}_{t \geq 0}$ if $E[P(X_t,t) \,\, |\; \mathcal{F}_{v}]
=P(X_v,v),$ for all $v \leq t,$ where $\mathcal{F}_{v}=\sigma\left( X_\tau : \tau \leq v\right)$
is the natural filtration associated with $\{X_t\}_{t \geq 0}.$ A L\'evy process is not necessarily 
a martingale. Therefore to find polynomials such that it is a martingale the stochastic process obtained by replacing the indeterminate 
$x$ with the L\'evy process $\{X_t\}_{t \geq 0},$ becomes fundamental, especially for applications in 
mathematical finance \cite{CKT}. In \cite{Sole}, to get a characterization of time-space harmonic polynomials,
the authors use the Teugels martingale and refer to centered L\'evy processes for which the martingale property holds. 
In this paper, we focus our attention on non-centered L\'evy processes, which do not share the martingale property
and we show how the classical umbral calculus allows us to get more general results without taking advantage
of the martingale property. Moreover, the umbral expression of these polynomials relies on a very simple 
closed-form of the corresponding coefficients which can be easily implemented in any symbolic software, see \cite{Dinoli} as example. 

The paper is structured as follows. Section 2 is provided for readers unaware of the classical umbral calculus. We 
have chosen to recall terminology, notation and the basic definitions strictly necessary to deal with the object of 
this paper. We skip any proof. The reader interested in is referred to \cite{Dinsen, Dinardoeurop}. Section 3 gives 
the umbral expression of L\'evy processes and analyses the classes of Kailath-Segall and time-space harmonic polynomials. 
In Section 4, we give umbral expressions of many classical families of polynomials as time-space harmonic 
with respect to suitable L\'evy processes.
%
\section{Background on the classical umbral calculus}
The classical umbral calculus is a syntax consisting of the following data: 

{\it (i)} a set $\A =\{\alpha, \beta, \gamma, \ldots\}$ of objects, called \emph{umbrae};

{\it (ii)} an \emph{evaluation} linear functional $E\,:\, \Real[x][\A]\To \Real[x]$, where $\Real$ is the 
field of real numbers, such that $E[1]=1$ and the \emph{uncorrelation property} holds
$$E[x^n \alpha^i\beta^j\gamma^k\cdots]=x^n E[\alpha^i]E[\beta^j]E[\gamma^k]\cdots$$
for all $\alpha, \beta, \gamma, \ldots \in \A$ and for all nonnegative integers $n, i, j, k, ,\ldots$ 

{\it (iii)} the \emph{augmentation} umbra $\vareps \in \A,$ with $E[\vareps^n] = \delta_{0,n},$ for all 
nonnegative integers $n,$ where $\delta_{0,n}$ is the Kronecker symbol, that is, $\delta_{0,n}$ is equal to $1$ if 
$n = 0$ and $0$ otherwise;

{\it (iv)} the \emph{unity umbra} $u \in \A,$ with $E[u^n] = 1,$ for all nonnegative integers $n.$

A sequence $a_0 = 1, a_1, a_2, \ldots \in \Real[x]$ is \emph{umbrally represented} by an umbra
$\alpha$ if $E[\alpha^n] = a_n,$ for all $n \geq 0.$ The element $a_n$ is the $n$-th \emph{moment} of
the umbra $\alpha.$ An umbra is said to be \emph{scalar} (respectively, \emph{polynomial}) if its
moments are in $\Real$ (respectively, in $\Real[x]$). A polynomial in $\Real[\A]$ is an \emph{umbral 
polynomial}. The \emph{generating function} of an umbra $\alpha \in \A$ is the formal power series $f(\alpha, z) 
= 1 + \sum_{n \geq 1} a_n z^n/ n!,$ for which we do not take into account any question of convergence \cite{Stanley}.

Special umbrae are the \emph{singleton umbra} $\chi \in \A,$ with $f(\chi,z) = 1 + z;$ the \emph{boolean unity} 
$\bar{u} \in \A,$ with $f(\bar{u},z) = 1/(1-z);$ the \emph{Bell umbra} $\beta \in \A,$ with $f(\beta,z) = \exp(e^z - 1)$
and moments the Bell numbers; the \emph{Bernoulli umbra} $\iota,$ with $f(\iota,z)=z/(e^z -1)$ and moments the 
Bernoulli numbers; the \emph{Euler umbra} $\eta,$ with $f(\eta, z) = 2 e^z/(1 + e^{2 z})$ and moments the Euler numbers. 

The alphabet $\A$ can be extended with new symbols arising from operations among
umbrae. These new umbrae are called \emph{auxiliary umbrae} and the resulting umbral calculus
is said to be \emph{saturated} \cite{SIAM}. Some useful auxiliary umbrae are recalled
in the following. 

{\bf Disjoint sum and difference.} Given $\alpha, \gamma \in \A,$ their
\emph{disjoint sum} $\alpha \, \dot{+} \, \gamma$ (respectively, \emph{disjoint difference}
$\alpha \, \dot{-} \, \gamma$) is such that
$f(\alpha \, \dot{+} \, \gamma,z) = f(\alpha,z) + f(\gamma,z) - 1$
(respectively, $f(\alpha \, \dot{-} \, \gamma,z) = f(\alpha,z) - f(\gamma,z) + 1$).

{\bf Dot-product.} First, let us observe that there are infinitely many and distinct umbrae
representing the same sequence of moments. More precisely, the umbrae $\alpha$ and $\gamma$
are said to be \emph{similar} if $E[\alpha^n]=E[\gamma^n]$ for all nonnegative integers $n,$ 
in symbols $\alpha \equiv \gamma.$ Now let us consider $n$ uncorrelated umbrae $\alpha',\alpha'',\ldots,\alpha'''$
similar to $\alpha$ and take their summation: the resulting umbra $\alpha' + \alpha'' + \cdots
+ \alpha'''$ is denoted by the symbol $n \punt \alpha.$ The umbra $n \punt \alpha$ is called
the \emph{dot-product} of the integer $n$ and the umbra $\alpha.$ Its generating function is $f(n \punt \alpha,z) =
(f(\alpha,z))^n$ and the moments are $E[(n \punt \alpha)^i] = \sum_{j=1}^i (n)_j B_{i,j}(a_1,\ldots,a_{i-j+1}),$
where $(n)_j$ is the lower factorial and $B_{i,j}$ are the partial exponential
Bell polynomials \cite{Dinardoeurop}. The integer $n$ can be replaced by any $t \in \Real$ so that
\begin{equation}
E[(t \punt \alpha)^i] = \sum_{j=1}^i (t)_j B_{i,j}(a_1,\ldots,a_{i-j+1}).
\label{(momlevy)}
\end{equation}
In particular, we have 
\begin{equation}
t \punt (\alpha + \gamma) \equiv t \punt \alpha + t \punt \gamma.
\label{(dist)}
\end{equation}
If $t=-1$ the umbra $-1 \punt \alpha$ is called the inverse of $\alpha.$ We have $-1 \punt \alpha + \alpha \equiv 
\vareps.$ In (\ref{(momlevy)}), we can also replace $t$ by any umbra $\gamma \in \A,$ for more details see \cite{Dinardoeurop}. 
If $(\gamma)_j= \gamma (\gamma - 1) \cdots (\gamma -j + 1)$ denotes the lower factorial polynomial, then we have
$E[(\gamma \punt \alpha)^i] = \sum_{j=1}^i E[(\gamma)_j] B_{i,j}(a_1,\ldots,a_{i-j+1}).$ 
The umbra $\gamma \punt \alpha$ is called the \emph{dot-product} of the umbrae $\alpha$
and $\gamma.$ Special dot-product umbrae are $\chi \punt \alpha$ and $\beta \punt \alpha.$
The umbra $\chi \punt \alpha$ is denoted by the symbol $\kappa_{{\scriptscriptstyle \alpha}}$ and called the 
$\alpha$-\emph{cumulant umbra} \cite{Bernoulli}, since $f(\kappa_{{\scriptscriptstyle \alpha}}, z) = 1 + \log(f(\alpha, z)).$
The umbra $\beta \punt \alpha$ is called the $\alpha$-\emph{partition umbra}.
In particular, we have $\alpha \equiv \beta \punt \kappa_{{\scriptscriptstyle \alpha}}$
and $\beta \punt \chi \equiv \chi \punt \beta \equiv u.$ Later on, we will often 
use the following properties for the cumulant umbra and the partition umbra: 
\begin{equation}
\chi \punt (\alpha + \gamma) \equiv \chi \punt \alpha \dot{+}\chi \punt \gamma, \qquad 
\beta \punt (\alpha \dot{+} \gamma) \equiv \beta \punt \alpha + \beta \punt \gamma.
\label{props}
\end{equation}
We also recall the distributive property of the summation with respect to the dot-product 
$(\alpha + \gamma) \punt \vartheta \equiv \alpha \punt \vartheta + \gamma \punt \vartheta.$ 

{\bf Composition umbra.} The \emph{composition umbra} of $\alpha$ and $\gamma$ is denoted by the symbol 
$\gamma \punt \beta \punt \alpha,$ where $\beta$ is the Bell umbra. Its generating function is
the composition of $f(\alpha,z)$ and $f(\gamma,z),$ that is $f(\gamma \punt \beta \punt \alpha, z) 
= f(\gamma, f(\alpha,z)-1).$ The moments are \cite{Dinardoeurop}
\begin{equation}
E[(\gamma \punt \beta \punt \alpha)^i] = \sum_{j=1}^i E[\gamma^j] B_{i,j}(a_1, \ldots, a_{i-j+1}).
\label{comp}
\end{equation}
As example of composition umbra, the \emph{compositional inverse umbra} 
$\alpha^{\scriptscriptstyle <-1>}$ of an umbra $\alpha$ is such that
$\alpha^{\scriptscriptstyle <-1>} \punt \beta \punt \alpha \equiv \chi \equiv
\alpha \punt \beta \punt \alpha^{\scriptscriptstyle <-1>}.$ In particular, we have
$f(\alpha^{\scriptscriptstyle <-1>},z)=f^{\scriptscriptstyle <-1>}(\alpha,z),$
where $f^{\scriptscriptstyle <-1>}$ denotes the compositional inverse of $f(\alpha,z)$
\cite{Stanley}.
%
\section{L\'evy processes}
Now we focus our attention on the family of auxiliary umbrae $\{t \punt \alpha\}_{t \geq 0}.$
If the moments of $\alpha$ are all finite, this family is the umbral counterpart of a 
stochastic process $\{X_t\}_{t \geq 0}$ such that $E[X_t^k] = E[(t \punt \alpha)^k],$ given in
(\ref{(momlevy)}), for all nonnegative integers $k.$ This stochastic process is a L\'evy process.
\begin{thm} \label{T1}
Let $\{X_t\}_{t \geq 0}$ be a L\'evy process and let $\alpha$ be the umbra such that
$f(\alpha,z) = E[e^{z X_1}].$ Then, the L\'evy process $\{X_t\}_{t \geq 0}$ is umbrally
represented by the family of auxiliary umbrae $\{t \punt \alpha\}_{t \geq 0}.$
\end{thm}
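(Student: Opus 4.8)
The plan is to verify directly the defining condition for umbral representation, namely that $E[X_t^k] = E[(t\punt\alpha)^k]$ for every nonnegative integer $k$ and every $t \geq 0$. Since both sides are encoded by generating functions, it suffices to show that the formal power series $f(t\punt\alpha, z)$ coincides with the moment generating function $E[e^{zX_t}]$; comparing the coefficients of $z^k/k!$ then yields the moment identity, and hence the theorem.

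First I would record the two ingredients separately. On the umbral side, the definition of the dot-product gives at once $f(t\punt\alpha, z) = (f(\alpha,z))^t$, and by hypothesis $f(\alpha, z) = E[e^{zX_1}]$, so that $f(t\punt\alpha, z) = (E[e^{zX_1}])^t$. On the probabilistic side, I would invoke the structure of a L\'evy process: stationarity and independence of the increments, together with $X_0 = 0$, force $E[e^{zX_t}] = (E[e^{zX_1}])^t$. For integer $t = n$ this is immediate, by writing $X_n$ as a sum of $n$ independent increments each distributed as $X_1$ and factoring the expectation; the extension to arbitrary $t \geq 0$ is precisely the infinite divisibility of the one-dimensional marginals, equivalently the fact that the cumulant transform $\log E[e^{zX_t}]$ is linear in $t$.

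Combining the two expressions gives $f(t\punt\alpha, z) = E[e^{zX_t}]$ as formal power series, whence $E[(t\punt\alpha)^k] = E[X_t^k]$ for all $k$, which is exactly the assertion that $\{X_t\}_{t\geq 0}$ is umbrally represented by $\{t\punt\alpha\}_{t\geq 0}.$ I would also remark that this clarifies, at the level of cumulants, why the dot-product is the correct umbral counterpart: since $f(\kappa_\alpha, z) = 1 + \log f(\alpha, z)$, the identity $f(t\punt\alpha, z) = (f(\alpha,z))^t$ is equivalent to the statement that the cumulants of $t\punt\alpha$ are $t$ times those of $\alpha$, matching the linear-in-time cumulants of the L\'evy process.

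I expect the only delicate point to be the passage from integer to real $t$ in $E[e^{zX_t}] = (E[e^{zX_1}])^t$, which rests on infinite divisibility rather than on a direct combinatorial manipulation and is where the full L\'evy structure is genuinely used. A secondary care is the interpretation of $f(\alpha,z) = E[e^{zX_1}]$: as stressed in the preamble, generating functions are treated formally, and the standing hypothesis that all moments of $\alpha$ are finite guarantees that the coefficientwise identity is meaningful, so that no analytic convergence of the moment generating function is actually required.
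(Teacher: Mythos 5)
Your argument is correct and coincides with the paper's own proof: both reduce the claim to the two identities $E[e^{zX_t}]=(E[e^{zX_1}])^t$ (from stationary independent increments and infinite divisibility) and $f(t\punt\alpha,z)=(f(\alpha,z))^t$ (from the definition of the dot-product), then match coefficients. The extra remarks on cumulant linearity and the formal treatment of generating functions are consistent with the paper's conventions but not needed beyond what the published proof already contains.
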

\begin{proof}
Recall that a L\'evy process $\{X_t\}_{t \geq 0}$ is a stochastic process which
starts at $0,$ with independent and stationary increments. If we denote by
$\phi(z,t)$ the moment generating function of the increment $X_{t+s} - X_s$ and by
$\phi(z)$ the moment generating function of $X_1,$ then $\phi(z,t)=(\phi(z))^t,$ due to
the infinite divisibility property \cite{Sato}. The result follows by
observing that we also have $f(t \punt \alpha, z) = [f(\alpha,z)]^t.$
\end{proof}
A fundamental result of the classical umbral calculus is that any umbra is a
partition umbra. In particular, if $\kappa_{{\scriptscriptstyle \alpha}}$ is
the $\alpha$-cumulant umbra, then $\alpha \equiv \beta \punt
\kappa_{{\scriptscriptstyle \alpha}}$ \cite{Dinardoeurop}.
Referring to L\'evy processes, this means that $f(t \punt \alpha,z) =
f(t \punt \beta \punt \kappa_{{\scriptscriptstyle \alpha}},z)
= \exp\{t [f(\kappa_{{\scriptscriptstyle \alpha}},z) - 1 ]\}$
which is very similar to the L\'evy-Khintchine formula \cite{Schoutens},
provided that we specify the expression of $f(\kappa_{{\scriptscriptstyle \alpha}},z).$
Indeed, if we denote by $E[e^{z X_t}] = (\phi(z))^t$ the moment generating function of
a L\'evy process $\{X_t\}_{t \geq 0},$ then the L\'evy-Khintchine
formula is
\begin{equation}
\phi(z) = \exp\left\{z \, m + \frac{1}{2}s^2z^2 +
\int_{\Real}\left(e^{zx} - 1 - zx{\textbf{1}}_{\{|x| \leq 1\}}\right){\rm d}(\nu(x))\right\}.
\label{(lkf)}
\end{equation}
The tern $(m, s^2, \nu)$ is called \emph{L\'evy triplet} and
$\nu$ is the \emph{L\'evy measure}. If $\nu$ is a measure admitting all moments and
if we set $c_0= m + \int_{\{|x| \geq 1\}} x \, {\rm d}(\nu(x)),$
then the L\'evy-Khintchine formula (\ref{(lkf)}) becomes
\begin{equation}
\phi(z) = \exp\left\{c_0 z + \frac{1}{2}s^2z^2\right\} \,
\exp\left\{\int_{ \Real}\left(e^{zx} - 1 - zx\right){\rm d}(\nu(x))\right\}. \label{LK2}
\end{equation}

The following theorem gives an umbral version of a L\'evy process, 
according to the L\'evy-Khintchine formula (\ref{LK2}).

\begin{thm} \label{T2}
A L\'evy process $\{X_t\}_{t \geq 0}$ is umbrally represented by the family 
$\{t \punt \beta \punt [c_0 \chi \dot{+} s \delta \dot{+}\gamma]\}_{t \geq 0},$ where
$\gamma$ is the umbra associated to the L\'evy measure, that is
$f(\gamma,z) = 1 + \int_{\Real}\left(e^{zx} - 1 - zx\right){\rm d}(\nu(x)),$
and $\delta$ is an umbra with $f(\delta,z) = 1 + z^2/2.$ 
\end{thm}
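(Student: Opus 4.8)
The plan is to verify that the generating function of the proposed umbral family coincides with the moment generating function $[\phi(z)]^t = E[e^{z X_t}]$ of the L\'evy process, and then invoke Theorem~\ref{T1}. Writing $\eta = c_0 \chi \dot{+} s \delta \dot{+} \gamma$ for the inner umbra, it suffices to show that $t \punt \beta \punt \eta \equiv t \punt \alpha,$ where $\alpha$ is the umbra of Theorem~\ref{T1} with $f(\alpha,z) = \phi(z);$ equivalently, that $f(t \punt \beta \punt \eta, z) = [\phi(z)]^t.$

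First I would compute the left-hand side in two stages. Since $\beta \punt \eta$ is the $\eta$-partition umbra, its generating function is $\exp\{f(\eta,z) - 1\},$ exactly as in the identity $f(t \punt \beta \punt \kappa_\alpha, z) = \exp\{t[f(\kappa_\alpha, z) - 1]\}$ recalled right after Theorem~\ref{T1}; taking the dot-product with $t$ raises this to the $t$-th power, so that
\begin{equation*}
f(t \punt \beta \punt \eta, z) = \exp\{t[f(\eta,z) - 1]\}.
\end{equation*}
Next I would evaluate $f(\eta,z)$ using the disjoint-sum rule $f(\alpha \dot{+} \gamma, z) = f(\alpha,z) + f(\gamma,z) - 1,$ extended to the three summands $c_0 \chi,$ $s \delta$ and $\gamma.$ Since scaling an umbra by a real constant $c$ scales the indeterminate, one has $f(c_0 \chi, z) = 1 + c_0 z$ and $f(s \delta, z) = 1 + s^2 z^2 / 2,$ while $f(\gamma,z)$ is the one prescribed in the statement. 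Summing the three series and subtracting the two redundant units coming from the two disjoint sums yields
\begin{equation*}
f(\eta,z) - 1 = c_0 z + \frac{1}{2} s^2 z^2 + \int_{\Real}\left(e^{zx} - 1 - zx\right){\rm d}(\nu(x)).
\end{equation*}

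Substituting this into the previous display and comparing the resulting exponent with $t$ times the logarithm of the factored L\'evy-Khintchine formula (\ref{LK2}) shows $f(t \punt \beta \punt \eta, z) = [\phi(z)]^t,$ which gives the required umbral identification through Theorem~\ref{T1}. The argument is essentially a chain of generating-function substitutions, so I do not expect a deep obstacle; the points requiring care are the convention $f(c\,\alpha, z) = f(\alpha, cz)$ for scalar multiples, so that $c_0 \chi$ and $s \delta$ produce precisely the drift term $c_0 z$ and the Gaussian term $\tfrac{1}{2} s^2 z^2,$ and the bookkeeping of additive constants in the disjoint sum of three umbrae, where each disjoint sum removes one unit. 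Once $f(\eta,z) - 1$ is seen to reproduce exactly the three contributions of (\ref{LK2}), the matching with $[\phi(z)]^t$ is immediate.
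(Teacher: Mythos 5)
Your proposal is correct and follows essentially the same route as the paper's own proof: compute $f(t \punt \beta \punt [c_0 \chi \dot{+} s \delta \dot{+}\gamma], z) = \exp\{t[f(c_0 \chi \dot{+} s \delta \dot{+}\gamma, z) - 1]\}$, expand the disjoint sum via $f(c_0 \chi \dot{+} s \delta \dot{+}\gamma, z) = f(c_0 \chi, z) + f(s \delta, z) + f(\gamma,z) - 2$, and identify the exponent with $\log \phi(z)$ from (\ref{LK2}). Your extra care with the scaling convention $f(s\delta,z) = 1 + s^2 z^2/2$ and the bookkeeping of the subtracted units is exactly what the paper leaves implicit.
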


\begin{proof}
We have $f(t \punt \beta \punt [c_0 \chi \dot{+} s \delta \dot{+}\gamma], z) 
= \exp\{t[f(c_0 \chi \dot{+} s \delta \dot{+}\gamma, z) - 1]\}.$
Since $f(c_0 \chi \dot{+} s \delta \dot{+}\gamma, z)
= f(c_0 \chi, z) + f(s \delta, z) + f(\gamma,z) - 2,$ where $f(c_0 \chi, z) = 1 + c_0 z,$ we have
$f(c_0 \chi \dot{+} s \delta \dot{+}\gamma, z) - 1 = \log \phi(z),$ with $\phi(z)$ given in (\ref{LK2}).
\end{proof}

\begin{rem} 
{\rm As introduced in \cite{Dinoli}, the \emph{Gaussian umbra} is the umbra
$m + \beta \punt (s \delta),$ where $m \in \Real,$ $s > 0$ and $\delta$ is
the umbra given in Theorem \ref{T2}. Recalling that $m \equiv \beta \punt \chi \punt m \equiv \beta \punt (m \chi),$
then we have $m + \beta \punt (s \delta) \equiv \beta \punt (m \chi \dot{+} s \delta),$  due to the 
latter of (\ref{props}). Thanks to Theorem \ref{T2} and the latter of (\ref{props}), a L\'evy process 
$\{X_t\}_{t \geq 0}$ is umbrally represented by the family $t \punt \beta \punt [c_0 \chi \dot{+} s \delta \dot{+}\gamma] 
\equiv t \punt \beta \punt [c_0 \chi \dot{+} s \delta] + t \punt \beta \punt \gamma.$
By recalling that the auxiliary umbra $t \punt \beta \punt \alpha$ is the umbral counterpart of
a compound Poisson process $S_N = Y_1 + \cdots + Y_N,$ with $\{Y_i\}$ independent and identically distributed
random variables and $N$ a Poisson random variable of parameter $t,$ then a L\'evy process is the summation
of two compound Poisson processes: in the first, the random variables $Y_i$ are  \textcolor{red}{Gaussian with $c_0$  mean} and variance $s^2,$ in the second the random variables $Y_i$ correspond to the umbra $\gamma$
associated to the L\'evy measure.}
\end{rem}
A centered L\'evy process is such that $E[X_t]=0$ for all $t \geq 0.$ This is equivalent to choose $c_0=0$ 
in equivalence (\ref{LK2}). 
\begin{cor}
A centered L\'evy process is umbrally represented by $\{t \punt \beta 
\punt (s \delta \dot{+}\gamma)\}_{t \geq 0}.$
\end{cor}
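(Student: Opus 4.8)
The plan is to read off the statement from Theorem \ref{T2} by specializing the drift coefficient to $c_0 = 0$. First I would confirm that the centering condition is exactly this specialization. Since the first moment of an umbra equals its first cumulant, and the representation of Theorem \ref{T2} gives $f(t \punt \beta \punt [c_0 \chi \dot{+} s \delta \dot{+} \gamma], z) = \phi(z)^t$ with $\phi$ as in (\ref{LK2}), one has $E[X_t] = t\,c_0$; consequently $E[X_t] = 0$ for all $t \geq 0$ if and only if $c_0 = 0$, as already noted before the statement.

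Next I would track what happens to the summand $c_0 \chi$ under this specialization. Its generating function is $f(c_0 \chi, z) = 1 + c_0 z$, so when $c_0 = 0$ it reduces to the series identically equal to $1 = f(\vareps, z)$; hence the umbra $c_0 \chi$ becomes similar to the augmentation umbra $\vareps$ of item (iii). The key (and only) algebraic remark is that $\vareps$ is neutral for the disjoint sum: from $f(\vareps \dot{+} \mu, z) = f(\vareps, z) + f(\mu, z) - 1 = f(\mu, z)$ we get $\vareps \dot{+} \mu \equiv \mu$ for every umbra $\mu$. Taking $\mu = s \delta \dot{+} \gamma$ yields $\vareps \dot{+} s \delta \dot{+} \gamma \equiv s \delta \dot{+} \gamma$.

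Combining these observations, substitution of $c_0 = 0$ in Theorem \ref{T2} together with the two similarities above gives $t \punt \beta \punt [c_0 \chi \dot{+} s \delta \dot{+} \gamma] \equiv t \punt \beta \punt (s \delta \dot{+} \gamma)$ for each $t \geq 0$, which is the asserted family. I expect no real obstacle: the argument is a direct manipulation of generating functions, and the only point needing a moment's care is that setting $c_0 = 0$ collapses $c_0 \chi$ to the augmentation umbra $\vareps$ rather than removing it outright, after which the neutrality of $\vareps$ under $\dot{+}$ finishes the proof.
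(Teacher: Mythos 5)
Your proposal is correct and follows essentially the same route as the paper, which presents this corollary as an immediate specialization of Theorem \ref{T2} after observing that centering ($E[X_t]=0$ for all $t$) is equivalent to $c_0=0$. The extra care you take in noting that $c_0\chi$ collapses to the augmentation umbra $\vareps$, which is neutral for the disjoint sum, is a valid and slightly more explicit justification of a step the paper leaves implicit.
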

The L\'evy process corresponding to (\ref{LK2}) is a martingale if and only if $c_0 = 0,$ see Theorem 5.2.1 in \cite{Applebaum}. 
This means that the singleton umbra $\chi$ plays a central role in the martingale property  
of a L\'evy process. Indeed, if $c_0=0,$ no contribution is given by the singleton umbra which indeed does not admit a probabilistic counterpart.
\subsection{The Kailath-Segall formula}
Let $\{X_t\}_{t \geq 0}$ be a centered L\'evy process with moments of all orders and let
$\{X_t^{(n)}\}_{t \geq 0}$ be the variations (\ref{prima}) of the process. The iterated stochastic integrals
(\ref{seconda}) are related to the variations $\{X_t^{(n)}\}_{t \geq 0}$ by the Kailath-Segall formula \cite{KS}
\begin{equation}
P_t^{(n)} = \frac{1}{n} \left( P_t^{(n-1)} X_t^{(1)} - P_t^{(n-2)} X_t^{(2)} + \cdots +
(-1)^{n+1} P_t^{(0)} X_t^{(n)} \right).
\label{(KS)}
\end{equation}
Then, $P_t^{(n)} = P_n\left(X_t^{(1)}, \ldots, X_t^{(n)}\right)$ is a polynomial in 
$X_t^{(1)}, X_t^{(2)}, \ldots, X_t^{(n)},$ called the $n$-th \emph{Kailath-Segall polynomial}. Let us 
introduce the family of umbrae $\{\Upsilon_t\}_{t \geq 0}$ such that $E[\Upsilon_t^n]= n! 
E\left[P_t^{(n)}\right]$ and $\{\sigma_t\}_{t \geq 0}$ such that $E[\sigma_t^n] = 
E[X_t^{(n)}],$ for all nonnegative integers $n.$ The following theorem states the umbral version of the Kailath-Segall 
formula and its inversion. 

\begin{thm} \label{Inv}
We have $\Upsilon_t \equiv \beta \punt [(\chi \punt \chi) \sigma_t]$  and $(\chi \punt \chi) \sigma_t \equiv \chi \punt \Upsilon_t.$ 
\end{thm}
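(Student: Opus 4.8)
The plan is to recognize the Kailath--Segall recursion (\ref{(KS)}) as a Newton-type identity expressing the ``elementary symmetric'' quantities $P_t^{(n)}$ through the ``power sums'' $X_t^{(k)}$, and then to read off the umbral statement at the level of generating functions. First I would rewrite (\ref{(KS)}) as $n P_t^{(n)} = \sum_{k=1}^{n} (-1)^{k+1} P_t^{(n-k)} X_t^{(k)}$ and introduce the ordinary generating series $G(z) = \sum_{n \geq 0} P_t^{(n)} z^n$ and $H(z) = \sum_{k \geq 1} (-1)^{k+1} X_t^{(k)} z^k$. The recursion is precisely the coefficientwise form of $z G'(z) = G(z) H(z)$; since $P_t^{(0)} = 1$ gives $G(0)=1$, integrating $\mathrm{d} \log G(z) = H(z)\, z^{-1}\, \mathrm{d}z$ produces the closed form
\[ G(z) = \exp\Big( \sum_{k \geq 1} (-1)^{k+1} \frac{X_t^{(k)}}{k} z^k \Big), \]
which is the announced analogue of $\sum_n e_n z^n = \exp\big(\sum_k (-1)^{k+1} p_k z^k / k\big)$.

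Next I would translate this identity into moments. By construction $f(\Upsilon_t, z) = \sum_{n \geq 0} E[\Upsilon_t^n] z^n / n! = \sum_{n \geq 0} E[P_t^{(n)}] z^n$, so passing to the evaluation and representing each variation through $\sigma_t$, via $E[\sigma_t^k] = E[X_t^{(k)}]$, turns the displayed formula into $f(\Upsilon_t, z) = \exp\big( \sum_{k \geq 1} (-1)^{k+1} E[X_t^{(k)}] z^k / k \big)$. On the umbral side I would compute the generating function of $\beta \punt [(\chi \punt \chi) \sigma_t]$: from $f(\chi \punt \chi, z) = 1 + \log(1+z)$ one gets $E[(\chi \punt \chi)^k] = (-1)^{k+1}(k-1)!$, hence by the uncorrelation property $E[((\chi \punt \chi)\sigma_t)^k] = (-1)^{k+1}(k-1)!\, E[X_t^{(k)}]$ and therefore $f((\chi \punt \chi)\sigma_t, z) - 1 = \sum_{k \geq 1} (-1)^{k+1} E[X_t^{(k)}] z^k / k$. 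Applying the partition-umbra generating function $f(\beta \punt \mu, z) = \exp(f(\mu, z) - 1)$ with $\mu = (\chi \punt \chi)\sigma_t$ makes the two series coincide, giving the first equivalence $\Upsilon_t \equiv \beta \punt [(\chi \punt \chi)\sigma_t]$.

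For the inversion I would apply $\chi \punt (\cdot)$ to the first equivalence and use that $\chi \punt \beta \punt \mu$ is a composition umbra with $f(\chi \punt \beta \punt \mu, z) = f(\chi, f(\mu, z) - 1) = f(\mu, z)$, whence $\chi \punt \Upsilon_t \equiv \chi \punt \beta \punt [(\chi \punt \chi)\sigma_t] \equiv (\chi \punt \chi)\sigma_t$; this is simply the statement that $(\chi \punt \chi)\sigma_t$ is the cumulant umbra of $\Upsilon_t$, consistent with $\alpha \equiv \beta \punt \kappa_{{\scriptscriptstyle \alpha}}$ and $\kappa_{{\scriptscriptstyle \alpha}} = \chi \punt \alpha$.

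The step I expect to demand the most care is the passage from the generating identity for $G(z)$ to the umbral moment identity. It is exactly here that the symbolic representation does its work: products of distinct variations must be linearized so that the probabilistic correlations among the $X_t^{(k)}$ are absorbed into the single umbra $\sigma_t$ and the elementary-symmetric/power-sum mechanism applies. By contrast, deriving the exponential form of $G(z)$ from (\ref{(KS)}) is routine once the generating-function bookkeeping is in place, so the genuine content lies in identifying the evaluation with $\beta \punt [(\chi \punt \chi)\sigma_t]$.
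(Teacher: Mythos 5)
Your argument reaches both equivalences and is essentially sound, but it takes a genuinely different route from the paper. The paper works directly at the level of moments: it checks by a binomial computation that the Kailath--Segall recursion (\ref{(KS)}) amounts to the umbral recurrence $E[\Upsilon_t^n]=E[\psi_t(\Upsilon_t+\psi_t)^{n-1}]$ with $\psi_t\equiv(\chi\punt\chi)\sigma_t$, invokes the characterization of the cumulant umbra from Theorem 3.1 of \cite{Dinoli} to obtain the inversion $\psi_t\equiv\chi\punt\Upsilon_t$ \emph{first}, and only then gets $\Upsilon_t\equiv\beta\punt\psi_t$ by composing with $\beta$ and using $\beta\punt\chi\equiv u$. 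You go the other way around: you extract the exponential identity $G(z)=\exp\bigl(\sum_{k\geq 1}(-1)^{k+1}X_t^{(k)}z^k/k\bigr)$ from (\ref{(KS)}) by the Newton-identity mechanism, match generating functions to prove $\Upsilon_t\equiv\beta\punt[(\chi\punt\chi)\sigma_t]$ first, and then deduce the inversion by applying $\chi\punt(\cdot)$ and using $\chi\punt\beta\punt\mu\equiv\mu$. Your route makes explicit the elementary-symmetric/power-sum interpretation that the paper only records as a remark after the corollary, and it avoids the appeal to the external theorem; the paper's route is shorter because the cited result already packages the cumulant recursion.

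The one step you must tighten is exactly the one you flag. The identity $G(z)=\exp(H_1(z))$ holds pathwise, with random coefficients, and applying $E$ to it does \emph{not} by itself give $f(\Upsilon_t,z)=\exp\bigl(\sum_k(-1)^{k+1}E[X_t^{(k)}]z^k/k\bigr)$: expectation does not commute with the exponential of a random series, since the coefficient of $z^n$ on the right involves mixed products $X_t^{(i_1)}\cdots X_t^{(i_r)}$ whose expectations need not factor. The legitimate move---and the one the paper implicitly makes when it declares (\ref{(KS)}) ``equivalent to'' $E[\Upsilon_t^n]=E[\psi_t(\Upsilon_t+\psi_t)^{n-1}]$---is to pass to the recursion between the scalar sequences $n!\,E[P_t^{(n)}]$ and $(-1)^{k-1}(k-1)!\,E[X_t^{(k)}]$, where each product $P_t^{(n-k)}X_t^{(k)}$ is replaced by the product of expectations, and only then run your generating-function computation; on these scalar sequences the Newton-identity manipulation is unproblematic and yields precisely the matching of $f(\Upsilon_t,z)$ with $f(\beta\punt[(\chi\punt\chi)\sigma_t],z)$. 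Saying that ``the symbolic representation does its work'' is the right instinct, but it should be cashed out concretely: the uncorrelation property of distinct umbrae is what encodes this factorization, so the whole derivation has to be carried out on the umbral (moment-sequence) side rather than transported there at the last step.
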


\begin{proof}
Assume $\psi_t \equiv (\chi \punt \chi) \sigma_t$ where $E[(\chi \punt \chi)^n]=(-1)^{n-1} (n-1)!$ \cite{Dinardoeurop}. 
The recurrence relation (\ref{(KS)}) is equivalent to $E[\Upsilon_t^n] = E[ \psi_t (\Upsilon_t + \psi_t)^{n-1}],$ for all 
$n \geq 1.$ Indeed, by definition of umbrae $\psi_t$ and $\Upsilon_t,$ we have
\begin{align*}
E[\Upsilon_t^n] &  = n ! \, \frac{1}{n}\left\{ \frac{E\left[\Upsilon_t^{n - 1}\right] 
E\left[\psi_t\right]}{(n - 1)!} + \frac{E\left[\Upsilon_t^{n - 2}\right] E\left[\psi_t^2\right]}{(n - 2)!} 
+ \cdots + \frac{E\left[\psi_t^n\right]}{(n - 1)!} \right\} \\
& = \sum_{j = 0}^{n - 1} \binom{n - 1}{j}E\left[\Upsilon_t^{n - 1 - j}\right] 
E\left[\psi_t^{j+1}\right] = E\left[\psi_t (\Upsilon_t + \psi_t)^{n - 1}\right].
\end{align*}
By using the first equivalence of Theorem 3.1 in \cite{Dinoli}, we have $\psi_t \equiv (\chi \punt \chi) \sigma_t 
\equiv \chi \punt \Upsilon_t$ (inversion of the Kailath-Segall formula).
The second equivalence follows by observing that $\psi_t \equiv \chi \punt \Upsilon_t \Leftrightarrow \beta \punt \psi_t 
\equiv \beta \punt \chi \punt \Upsilon_t$ and $\beta \punt \chi \equiv u.$  
\end{proof}
By recalling that the moments of $\beta \punt \alpha$ are the (exponential) complete Bell   
polynomials \cite{Comtet} in the moments of $\alpha$, see \cite{Dinsen} formula (29), then the \emph{Kailath-Segall polynomials} 
are complete Bell exponential polynomials in $\{(-1)^{n-1} (n-1)! E[X_t^{(n)}]\}.$ From the inversion 
of the Kailath-Segall formula and equivalence (\ref{comp}), the following corollary follows. 
\begin{cor}
If $c_i=i! E\left[P_t^{(i)}\right]$ for $i=1, \ldots, n,$ then
$$E[X_t^{(n)}] = \sum_{j=1}^n \frac{(-1)^{n-j}}{(n-1)_{n-j}} B_{n,j} (c_1, c_2, \ldots, c_{n-j+1}).$$ 
\end{cor}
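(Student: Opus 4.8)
The plan is to invert the relation $\Upsilon_t \equiv \beta \punt \psi_t$ established in Theorem~\ref{Inv}, where $\psi_t \equiv (\chi \punt \chi)\sigma_t \equiv \chi \punt \Upsilon_t$. First I would translate the moment hypothesis into umbral language: since $c_i = i!\,E[P_t^{(i)}] = E[\Upsilon_t^i]$ by the very definition of the umbra $\Upsilon_t$, the polynomials $B_{n,j}(c_1,\dots,c_{n-j+1})$ are partial Bell polynomials evaluated at the moments of $\Upsilon_t$. Likewise, by definition of $\sigma_t$ we have $E[X_t^{(n)}] = E[\sigma_t^n]$, so the target identity is equivalent to expressing the moments of $\sigma_t$ in terms of those of $\Upsilon_t$.

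The key computational engine is the composition-umbra moment formula (\ref{comp}). From Theorem~\ref{Inv} we have $\psi_t \equiv \chi \punt \Upsilon_t$, and since $\chi \punt \Upsilon_t$ is a dot-product of the singleton umbra $\chi$ with $\Upsilon_t$, its moments are obtained from $E[\chi^j]$ and the partial Bell polynomials in the moments of $\Upsilon_t$. The crucial numerical input is $E[(\chi \punt \chi)^n] = (-1)^{n-1}(n-1)!$, quoted in the proof of Theorem~\ref{Inv}. Now I would recover $\sigma_t$ from $\psi_t$: because $\psi_t \equiv (\chi \punt \chi)\sigma_t$, scaling by the umbra $\chi \punt \chi$ multiplies the $n$-th moment by $E[(\chi \punt \chi)^n]$, so that
\begin{equation}
E[\psi_t^n] = E[(\chi \punt \chi)^n]\, E[\sigma_t^n] = (-1)^{n-1}(n-1)!\, E[X_t^{(n)}].
\label{psimom}
\end{equation}
Solving (\ref{psimom}) for $E[X_t^{(n)}]$ gives $E[X_t^{(n)}] = (-1)^{n-1} E[\psi_t^n]/(n-1)!$, reducing the problem to computing $E[\psi_t^n]$.

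It remains to expand $E[\psi_t^n] = E[(\chi \punt \Upsilon_t)^n]$ using the dot-product formula, which yields $E[(\chi \punt \Upsilon_t)^n] = \sum_{j=1}^n E[(\chi)_j]\, B_{n,j}(c_1,\dots,c_{n-j+1})$, where the $c_i = E[\Upsilon_t^i]$ are as above. Since $\chi$ is the singleton umbra with $E[\chi]=1$ and $E[\chi^k]=0$ for $k\ge 2$, the lower factorial moments collapse to $E[(\chi)_j] = (-1)^{j-1}(j-1)!$. Substituting this and (\ref{psimom}) into the solved relation, I would collect the signs and factorials: the factor $(-1)^{n-1}(-1)^{j-1} = (-1)^{n-j}$ gives the sign in the statement, and the ratio $(j-1)!/(n-1)!$ must be recognized as $1/(n-1)_{n-j}$, since $(n-1)_{n-j} = (n-1)(n-2)\cdots(j) = (n-1)!/(j-1)!$. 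The main obstacle is purely bookkeeping: correctly combining the three sources of signs and factorials — from $E[(\chi \punt \chi)^n]$, from $E[(\chi)_j]$, and from the $1/(n-1)!$ normalization — and verifying that they assemble exactly into the coefficient $(-1)^{n-j}/(n-1)_{n-j}$ claimed in the corollary, with no off-by-one error in the falling factorial.
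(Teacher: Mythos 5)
Your proposal is correct and follows essentially the same route as the paper, which derives the corollary from the inversion $(\chi \punt \chi)\sigma_t \equiv \chi \punt \Upsilon_t$ of Theorem \ref{Inv} together with the moment expansion of a dot/composition product; your bookkeeping of signs and factorials, including $E[(\chi)_j]=(-1)^{j-1}(j-1)!$ and $(n-1)_{n-j}=(n-1)!/(j-1)!$, is accurate. The only cosmetic difference is that the paper invokes (\ref{comp}) via $\chi \punt \Upsilon_t \equiv (\chi\punt\chi)\punt\beta\punt\Upsilon_t$, whose coefficients $E[(\chi\punt\chi)^j]$ coincide with your $E[(\chi)_j]$, so the computations are identical.
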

The inversion of the Kailath-Segall formula in Theorem \ref{Inv} is a generalization of formula (3.2)
in \cite{Bernoulli} which gives the elementary symmetric polynomials in terms of
power sum symmetric polynomials. That is, if we replace the jumps $\{\Delta X_s\}$ in $X_t^{(n)}$ with suitable 
indeterminates $\{x_s\},$ then the Kailath-Segall polynomials reduce to the polynomials given in \cite{Taqqu}.

\subsection{Umbral time-space harmonic polynomials}
Let us recall the definition of conditional evaluation given in \cite{{Dinoli2}}. Denote by ${\mathcal X}$ the set 
${\mathcal X} = \{\alpha\}.$
\begin{defn}\label{condeval1}
The linear operator $E(\;\cdot\; \vline \,\, \alpha):\, {\mathbb R}[x][\A]
\;\longrightarrow\; {\mathbb R}[{\mathcal X}]$
such that
\begin{itemize}
\item[{\it i)}] $E(1 \,\, \vline \,\,\alpha)=1$;
\item[{\it ii)}] $E(x^m \alpha^n \gamma^i \delta^j\cdots \, \,  \vline \,\, \alpha)=x^m \alpha^n
 E[\gamma^i]E[\delta^j]\cdots$ for uncorrelated umbrae
$\alpha, \gamma, \delta, \ldots$ and for nonnegative integers $m,n,i,j,\ldots$
\end{itemize}
is called \emph{conditional evaluation} with respect to $\alpha.$
\end{defn}

In other words, Definition \ref{condeval1} says that the conditional evaluation
with respect to $\alpha$ handles the umbra $\alpha$ as it
was an indeterminate.

\begin{defn}\label{condeval}
Let $\{P(x,t)\} \in {\mathbb R}[x]$ be a family of polynomials indexed by $t \geq 0.$
$P(x,t)$ is said to be a {\rm time-space harmonic polynomial} with respect to the family
of auxiliary umbrae $\{q(t)\}_{t \geq 0}$ if and only if
$E \left[ P(q(t),t) \, \, \vline \, \, q(s) \right] = P(q(s),s)$ for all $0 \leq s \leq t.$
\end{defn}

\begin{thm}\label{UTSH2}
The family of polynomials $\{Q_k(x,t)\}_{t \geq 0} \in \Real[x],$ where $Q_k(x,t)=E[(x-t \punt \alpha)^k]$ 
for all nonnegative integers $k,$ is time-space harmonic with respect to $\{t \punt \alpha\}_{t \geq 0}.$
\end{thm}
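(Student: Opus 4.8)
The plan is to collapse the martingale-type identity of Definition~\ref{condeval} into a single generating-function identity and then read off all orders $k$ simultaneously. First I would assemble the $Q_k(x,t)$ into the exponential generating function
\[
g(x,t,z):=\sum_{k\ge 0}Q_k(x,t)\,\frac{z^k}{k!}=E\!\left[e^{z(x-t\punt\alpha)}\right]=e^{zx}\,[f(\alpha,z)]^{-t},
\]
where I read $x-t\punt\alpha$ as $x$ plus the \emph{inverse umbra} $-t\punt\alpha$, i.e. the umbra whose generating function is $[f(\alpha,z)]^{-t}$ (equivalently $-1\punt(t\punt\alpha)$); this is the convention under which $t\punt\alpha-t\punt\alpha\equiv\vareps.$ Since each $Q_k(x,t)\in\Real[x]$ is an ordinary polynomial once the umbral part has been evaluated, substituting the live umbra $x=t\punt\alpha$ is unambiguous and yields $g(t\punt\alpha,t,z)=e^{z\,(t\punt\alpha)}[f(\alpha,z)]^{-t}$, the first factor being a live umbral exponential and the second a scalar formal series in $z.$ Because the conditional evaluation is linear it acts term-by-term on this series, so it suffices to prove $E\big(g(t\punt\alpha,t,z)\,\vline\,s\punt\alpha\big)=g(s\punt\alpha,s,z)$ and then to equate coefficients of $z^k/k!.$

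Second I would record the two structural inputs. The independent-increment decomposition $t\punt\alpha\equiv s\punt\alpha+(t-s)\punt\alpha'$, with $\alpha'$ a copy of $\alpha$ uncorrelated with the umbrae forming $s\punt\alpha$, follows from the fact that summing uncorrelated umbrae multiplies generating functions together with $[f(\alpha,z)]^{s}[f(\alpha,z)]^{t-s}=[f(\alpha,z)]^{t}=f(t\punt\alpha,z).$ The second input is property \emph{ii)} of the conditional evaluation in Definition~\ref{condeval1}: it treats the conditioning umbra $s\punt\alpha$ as an indeterminate while evaluating any uncorrelated umbra, which at the level of the exponential series reads
\[
E\big(e^{z(s\punt\alpha+(t-s)\punt\alpha')}\,\vline\,s\punt\alpha\big)=e^{z\,(s\punt\alpha)}\,E\!\left[e^{z(t-s)\punt\alpha'}\right]=e^{z\,(s\punt\alpha)}\,[f(\alpha,z)]^{t-s}.
\]

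Third I would combine these. Substituting the decomposition into $g(t\punt\alpha,t,z)$ and pulling the scalar factor $[f(\alpha,z)]^{-t}$ through the linear conditional evaluation gives
\[
E\big(g(t\punt\alpha,t,z)\,\vline\,s\punt\alpha\big)=[f(\alpha,z)]^{-t}\,e^{z\,(s\punt\alpha)}\,[f(\alpha,z)]^{t-s}=e^{z\,(s\punt\alpha)}\,[f(\alpha,z)]^{-s}=g(s\punt\alpha,s,z).
\]
Extracting the coefficient of $z^k/k!$ then yields $E\big(Q_k(t\punt\alpha,t)\,\vline\,s\punt\alpha\big)=Q_k(s\punt\alpha,s)$ for every $k$ and every $0\le s\le t$, which is exactly the time-space harmonic property of Definition~\ref{condeval} with respect to $\{t\punt\alpha\}_{t\ge0}.$

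I expect the main obstacle to be bookkeeping rather than analysis: one must keep the ``dummy'' umbra inside the defining evaluation $E[(x-t\punt\alpha)^k]$ strictly separate from the ``live'' process umbra substituted for $x$, and ensure the increment $(t-s)\punt\alpha'$ is genuinely uncorrelated with $s\punt\alpha$ so that property \emph{ii)} of Definition~\ref{condeval1} applies cleanly. A second point to flag is the sign convention: the telescoping $[f(\alpha,z)]^{-t}[f(\alpha,z)]^{t-s}=[f(\alpha,z)]^{-s}$ is precisely what makes the identity close, and it does so only because $-t\punt\alpha$ is the inverse umbra with generating function $[f(\alpha,z)]^{-t}$; the naive reading in which $E[(x-t\punt\alpha)^k]=\sum_j\binom{k}{j}(-1)^j x^{k-j}E[(t\punt\alpha)^j]$ would instead produce $[f(\alpha,-z)]^{t}$ and the cancellation would fail, so this must be fixed at the outset. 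One could equivalently run the whole argument at the level of moments via the increment decomposition and a binomial expansion, but the generating-function route avoids the partial-Bell-polynomial bookkeeping of (\ref{(momlevy)}) entirely.
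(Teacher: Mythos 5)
Your proof is correct and, modulo packaging, coincides with the argument the paper relies on: the paper gives no proof of Theorem \ref{UTSH2} (it defers to \cite{Dinoli2}), and the proof there rests on exactly your two inputs --- the decomposition $t\punt\alpha\equiv s\punt\alpha+(t-s)\punt\alpha'$ into uncorrelated pieces and property \emph{ii)} of the conditional evaluation --- carried out at the level of moments and binomial expansions rather than of the exponential generating function, so your telescoping $[f(\alpha,z)]^{-t}\,[f(\alpha,z)]^{t-s}=[f(\alpha,z)]^{-s}$ is just the generating-function form of the similarity $(t-s)\punt\alpha'+(-t)\punt\alpha\equiv(-s)\punt\alpha$. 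You also correctly pin down the one convention everything hinges on, namely that $-t\punt\alpha$ means $(-t)\punt\alpha$ with generating function $[f(\alpha,z)]^{-t}$; the only step taken on faith is the extension of Definition \ref{condeval1} from a single umbra to the auxiliary umbra $s\punt\alpha$, which is precisely what \cite{Dinoli2} supplies.
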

The proof of Theorem \ref{UTSH2} is in \cite{Dinoli2}.  
\begin{rem}\label{propcomb}
{\rm Every linear combination of $\{Q_k(x,t)\}_{t \geq 0}$ is a time-space harmonic polynomial 
with respect to $\{t \punt \alpha\}_{t \geq 0}.$}
\end{rem}

Theorem \ref{UTSH2} guarantees that the polynomial
\begin{equation}
Q_k(x,t) = E [ ( x - t \punt \beta \punt [c_0 \chi \dot{+} s \delta \dot{+}\gamma]
)^k] 
\label{(TSHL)}
\end{equation}
of degree $k$ in the variable $x$ and depending on the parameter $t,$ is time-space harmonic with respect to the 
family of auxiliary umbrae $\{t \punt \beta \punt [c_0 \chi \dot{+} s \delta \dot{+}\gamma]\}_{t \geq 0},$ 
that is with respect to a L\'evy process, thanks to Theorem \ref{T2}. The following theorem generalizes
Corollary 2(a) in \cite{Sole}.

\begin{prop} \label{bellcompl}
We have $Q_k(x,t) = Y_k(x - t c_0,-t(s^2+m_2),-t m_3,\ldots,-t m_k),$
for all nonnegative integers $k \geq 1,$ where $Y_k$ are the complete Bell polynomials  and 
$m_i=E[\gamma^i],$ for all $i = 2, \ldots, k.$
\end{prop}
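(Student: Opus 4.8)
The plan is to evaluate the right-hand side of $Q_k(x,t)=E[(x-t\punt\beta\punt[c_0\chi\dot{+}s\delta\dot{+}\gamma])^k]$ by first identifying the cumulant umbra of the L\'evy umbra, and then recognizing the complete Bell polynomials as moments of a partition umbra. Recall from the excerpt that $\beta\punt\kappa_{\scriptscriptstyle\alpha}\equiv\alpha$ and that the moments of $\beta\punt\zeta$ are the complete (exponential) Bell polynomials $Y_k$ in the moments of $\zeta$. So the key is to compute the moments of the \emph{cumulant} umbra of the inner umbra $\zeta:=c_0\chi\dot{+}s\delta\dot{+}\gamma$, since $t\punt\beta\punt\zeta$ has the same distribution as $\beta\punt(t\punt\kappa_{\scriptscriptstyle\zeta}\text{-type object})$; more directly, by $f(t\punt\beta\punt\zeta,z)=\exp\{t[f(\zeta,z)-1]\}$, the umbra $t\punt\beta\punt\zeta$ is a partition umbra $\beta\punt\omega$ whose cumulant-generating data is $t[f(\zeta,z)-1]$.

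First I would compute $f(\zeta,z)-1$ explicitly. Using the disjoint-sum rule from the excerpt, $f(c_0\chi\dot{+}s\delta\dot{+}\gamma,z)-1=[f(c_0\chi,z)-1]+[f(s\delta,z)-1]+[f(\gamma,z)-1]$. Here $f(c_0\chi,z)=1+c_0 z$ contributes $c_0 z$; the umbra $s\delta$ has $f(s\delta,z)=1+s^2z^2/2$ (since $f(\delta,z)=1+z^2/2$ and scaling by $s$ multiplies the degree-$n$ term by $s^n$), contributing $s^2z^2/2$; and $f(\gamma,z)-1=\sum_{n\geq 2}m_n z^n/n!$ where $m_n=E[\gamma^n]$ and the linear term vanishes by the form of $f(\gamma,z)$. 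Thus the coefficient of $z^1/1!$ in $t[f(\zeta,z)-1]$ is $tc_0$, the coefficient of $z^2/2!$ is $t(s^2+m_2)$, and for $n\geq 3$ the coefficient of $z^n/n!$ is $tm_n$. These are precisely the moments of the cumulant umbra scaled by $t$, i.e. the arguments that feed into the complete Bell polynomial representation of a partition umbra.

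Next I would handle the shift by $x$ and the sign. The polynomial $Q_k(x,t)=E[(x-t\punt\beta\punt\zeta)^k]$ is, by linearity and the binomial expansion in the umbral variable, the $k$-th moment of the umbra $x\cdot u+(-1\punt(t\punt\beta\punt\zeta))$, so it equals the complete Bell polynomial $Y_k$ evaluated at the cumulant-moments of this shifted, negated umbra. Negation via $-1\punt$ flips the sign of every cumulant, turning $tc_0\mapsto -tc_0$, $t(s^2+m_2)\mapsto -t(s^2+m_2)$, and $tm_n\mapsto -tm_n$; and adding the deterministic $x$ shifts only the first cumulant, turning $-tc_0$ into $x-tc_0$. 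Substituting these into $Y_k$ gives exactly $Y_k(x-tc_0,-t(s^2+m_2),-tm_3,\ldots,-tm_k)$, which is the claimed formula.

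The main obstacle is bookkeeping the interplay between the partition-umbra/complete-Bell identity and the two operations (scaling-by-$t$ via $t\punt$ and the affine shift $x-(\cdot)$) so that the cumulants are read off in the correct normalization: one must be careful that $Y_k$ takes \emph{cumulants} (i.e. the coefficients of $z^n/n!$ in $\log$ of the generating function), not raw moments, as its arguments, and that the factor $t$ and the minus signs propagate only through the cumulants and not through any spurious combinatorial factor. Once the identification $f(t\punt\beta\punt\zeta,z)=\exp\{t[f(\zeta,z)-1]\}$ is in hand, together with $\beta\punt\omega$ having moments $Y_k$ in the moments of $\omega$, the verification reduces to matching the first few coefficients computed above, which I would state rather than grind through.
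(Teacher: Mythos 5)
Your proof is correct and follows essentially the same route as the paper: both identify $Q_k(x,t)$ as the $k$-th moment of the umbra $x - t\punt\beta\punt[c_0\chi\dot{+}s\delta\dot{+}\gamma]$, compute the cumulants of that umbra to be $x-tc_0$, $-t(s^2+m_2)$, $-tm_3,\ldots$, and invoke the identity expressing moments as complete Bell polynomials of cumulants. The only (cosmetic) difference is that you read the cumulants off the logarithm of the generating function, whereas the paper computes them by applying the singleton umbra $\chi\punt$ and the property $\chi\punt(\alpha+\gamma)\equiv\chi\punt\alpha\,\dot{+}\,\chi\punt\gamma$.
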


\begin{proof}
As proved in \cite{Dinardoeurop}, if $a_k = E[\alpha^k]$ and $b_i = E[\kappa_{{\scriptscriptstyle \alpha}}^i],$ 
for $i = 1,\ldots,k$ then $a_k = Y_k(b_1,b_2,\ldots,b_k),$ where $Y_k$ are the complete Bell polynomials.

By definition of cumulant umbra and by virtue of equivalence (\ref{(TSHL)}), we have
\begin{equation*}
E\{[\chi \punt  (x - t \punt \beta \punt [c_0 \chi \dot{+} s \delta \dot{+}\gamma])]^k\} =
\begin{cases} 
x + E\{\chi \punt (- t) \punt \beta \punt [c_0 \chi \dot{+} s \delta \dot{+}\gamma]\}, & \mbox{if } k = 1 \\ 
E\{(\chi \punt (- t) \punt \beta \punt [c_0 \chi \dot{+} s \delta \dot{+}\gamma])^k\}, & \mbox{if } k >1.
\end{cases}
\end{equation*}

Therefore, since $E\{\chi \punt (-t) \punt \beta \punt [c_0 \chi \dot{+} s \delta \dot{+}\gamma]\} = -t c_0,$ 
$E\{(\chi \punt (-t) \punt \beta \punt [c_0 \chi \dot{+} s \delta \dot{+}\gamma])^2\} = -t(s^2 + m_2)$ 
and $E\{(\chi \punt (-t) \punt \beta \punt [c_0 \chi \dot{+} s \delta \dot{+}\gamma])^k\} = -t m_k,$ for $k\geq 3,$ 
the result follows. \textcolor{red}{Indeed we have proved} that the cumulant umbra of $x - t \punt \beta \punt [c_0 \chi \dot{+} 
s \delta \dot{+}\gamma]$ has the first $k$ moments given by $x- t c_0, -t(s^2+m_2), \ldots, -tm_k.$
\end{proof}

We observe that the polynomial umbra $x - t \punt \beta \punt [c_0 \chi \dot{+} s \delta \dot{+}\gamma]$
is an Appell umbra with respect to the indeterminate $x$ \cite{Sheffer}. Therefore the moments 
$\{Q_k(x,t)\}_{k \in \mathbb{N}}$ in (\ref{(TSHL)}) are Appell polynomials such that 
$\partial Q_k(x,t)/\partial x = k Q_{k-1}(x,t).$ With respect to $t,$ the polynomial umbra 
$x - t \punt \beta \punt [c_0 \chi \dot{+} s \delta \dot{+}\gamma]$ is a Sheffer umbra 
\cite{Sheffer}, so that the Sheffer identity holds $Q_k(x,t + v)=\sum_{j=0}^k \binom{k}{j} P_j(v) Q_{k-j}(x,t),$ where 
$Q_{k - j}(x,t)$ are given in (\ref{(TSHL)})  and $P_j(v)=Q_{j}(0,v),$ for all nonnegative integers $j.$
\section{Examples}
\subsection{Sum of two independent L\'evy processes}
Let us consider two independent L\'evy processes $W = \{W_t\}_{t \geq 0}$ and $ Z =\{Z_t\}_{t \geq 0},$ 
umbrally represented by $\{t \punt \alpha\}_{t \geq 0}$ and $\{t \punt \gamma\}_{t \geq 0},$ respectively. 
Due to the distributive property (\ref{(dist)}), the process $X = W + Z$ is umbrally represented by
$t \punt (\alpha + \gamma) \equiv t \punt \alpha + t \punt \gamma.$ If we replace $\Real[x]$ with $\Real[x,w,z]$ 
\cite{Bernoulli}, and denote by $\{Q_k(x,t)\}_{k \in N},$ $\{Q^{\prime}_k(x,t)\}_{k \in N}$ and 
$\{Q^{\prime \prime}_k(x,t)\}_{k \in N}$ the time-space harmonic polynomials with respect to $\{X_t\}_{t \geq 0},$ 
$\{W_t\}_{t \geq 0}$ and $\{Z_t\}_{t \geq 0},$ respectively, we have $Q_k(x,t) = \sum_{j=0}^k {k \choose j} 
Q^{\prime}_j(w,t)Q^{\prime \prime}_{k-j}(z,t),$ if $x = w+z.$ 
\subsection{Brownian motion} 
The Brownian motion $\{B_t\}_{t \geq 0}$ is a L\'evy process whose increments are Gaussian random variables with 
zero mean, variance $s^2$ and zero L\'evy measure. Hence, thanks to Theorem \ref{T2}, the umbral counterpart 
of $\{B_t\}_{t \geq 0}$ is given by the family of umbrae $\{t \punt \beta \punt (s \delta)\}_{t \geq 0}.$ 
The standard Brownian motion is recovered by setting $s = 1.$

From Theorem \ref{UTSH2}, for all nonnegative integers $k,$ the polynomials $Q_k(x,t) = E[(x - t \punt \beta \punt 
(s\delta))^k]$ are time-space harmonic  with respect to the Brownian motion $\{B_t\}_{t \geq 0}$. 

\begin{prop}
For all nonnegative integers $k \geq 1,$ we have $Q_k(x,t) = H_k^{(s^2 t)}(x).$
\end{prop}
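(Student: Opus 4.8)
The plan is to specialize Proposition~\ref{bellcompl} to the Brownian motion and then match the resulting complete Bell polynomial against the exponential generating function of the variance-parametrized Hermite polynomials.

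First I would pin down the Lévy triplet. For the Brownian motion $\{B_t\}_{t \geq 0}$ the increments are centered Gaussians of variance $s^2$ with vanishing Lévy measure, so in the notation of Theorem~\ref{T2} one has $c_0 = 0$ and, since $f(\gamma,z) = 1 + \int_{\Real}(e^{zx}-1-zx)\,{\rm d}(\nu(x)) = 1$ when $\nu = 0$, also $\gamma \equiv \vareps$. Consequently $m_i = E[\gamma^i] = E[\vareps^i] = 0$ for every $i \geq 2$. Substituting these values into the statement of Proposition~\ref{bellcompl} collapses all but the first two arguments, giving
\[
Q_k(x,t) = Y_k\bigl(x,\, -t s^2,\, 0,\ldots,0\bigr),
\]
that is, the complete Bell polynomial evaluated at the sequence whose only nonzero entries are $x$ and $-t s^2$. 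Equivalently, the cumulant umbra of $x - t \punt \beta \punt (s \delta)$ has first moment $x$, second moment $-t s^2$, and all higher moments zero, which is precisely the Gaussian situation isolated in the Remark following Theorem~\ref{T2}.

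Next I would pass to generating functions. Recalling that the complete Bell polynomials are determined by $\sum_{k \geq 0} Y_k(b_1,b_2,\ldots)\, z^k/k! = \exp\bigl(\sum_{j \geq 1} b_j z^j/j!\bigr)$, the truncation to two nonzero arguments reduces the exponent to a quadratic, so that
\[
\sum_{k \geq 0} Q_k(x,t)\, \frac{z^k}{k!} = \exp\Bigl(x z - \tfrac{1}{2}\, s^2 t\, z^2\Bigr).
\]
Finally I would compare this with the exponential generating function of the Hermite polynomials of variance $v$, namely $\sum_{k \geq 0} H_k^{(v)}(x)\, z^k/k! = \exp\bigl(x z - \tfrac{1}{2} v z^2\bigr)$. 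Setting $v = s^2 t$ makes the two series coincide, and equating the coefficients of $z^k/k!$ yields $Q_k(x,t) = H_k^{(s^2 t)}(x)$ for all $k \geq 1$.

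I expect the only point requiring genuine care to be the bookkeeping of conventions: one must fix the normalization of $H_k^{(v)}$, here the variance-$v$ (probabilist's) scaling whose generating function is exactly $\exp(xz - v z^2/2)$, and one must confirm that the vanishing Lévy measure indeed forces $\gamma \equiv \vareps$ and hence annihilates every $m_i$. Once these conventions are settled, the remaining argument is a one-line generating-function comparison, so I anticipate no substantive obstacle; alternatively, one could bypass generating functions altogether and verify the claim inductively from the Appell relation $\partial Q_k(x,t)/\partial x = k\, Q_{k-1}(x,t)$ noted after Proposition~\ref{bellcompl}, which the Hermite polynomials share.
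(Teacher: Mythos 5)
Your proof is correct, but it follows a genuinely different route from the paper's. The paper does not invoke Proposition~\ref{bellcompl} at all: it quotes the identity $H_k^{(s^2)}(x) = E\{[x - 1 \punt \beta \punt (s\delta)]^k\}$ already established in \cite{Dinoli2}, substitutes $s \mapsto \sqrt{t}\,s$ to get $H_k^{(s^2 t)}(x) = E[(x - 1 \punt \beta \punt (\sqrt{t}s\delta))^k]$, and concludes via the single scaling equivalence $-1 \punt \beta \punt (\sqrt{t}s\delta) \equiv -t \punt \beta \punt (s\delta)$, which absorbs the factor $t$ from the quadratic generating function of $\delta$ into the dot-product. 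Your argument instead specializes Proposition~\ref{bellcompl} to the triplet $c_0=0$, $\nu=0$ (so $\gamma\equiv\vareps$ and all $m_i$ vanish), reduces $Q_k(x,t)$ to $Y_k(x,-ts^2,0,\ldots,0)$, and identifies the resulting exponential generating function $\exp(xz - s^2 t z^2/2)$ with that of $H_k^{(s^2 t)}(x)$. What your approach buys is self-containment within the present paper --- you do not need the external identity from \cite{Dinoli2}, only the standard generating function of the complete Bell polynomials --- and it makes transparent that the Hermite case is exactly the case where all cumulants beyond the second vanish. What the paper's approach buys is brevity and a reusable umbral scaling trick; it also avoids having to verify the Bell-polynomial generating-function convention. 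Your two stated points of care (the normalization of $H_k^{(v)}$ and the fact that $\nu=0$ forces $\gamma\equiv\vareps$) are both handled consistently with the paper's conventions, so there is no gap.
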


\begin{proof}
Recall that \textcolor{red}{the} generalized Hermite polynomials $\{H_k^{(s^2)}(x)\}_{t \geq 0}$ have generating function $$\sum_{k \geq 0} H_k^{(s^2)}(x)\frac{z^k}{k!} = \exp\left\{xz - \frac{s^2 z^2}{2}\right\}.$$

In \cite{Dinoli2} we have proved that $H_k^{(s^2)}(x) = E\{[x - 1 \punt \beta \punt (s\delta)]^k\}.$ 
In particular we have $H_k^{(s^2 t)}(x) = E[(x - 1 \punt \beta \punt (\sqrt{t}s\delta))^k].$ The result follows 
by observing that $-1 \punt \beta \punt (\sqrt{t}s\delta) \equiv -t \punt \beta \punt s\delta.$
\end{proof}
\subsection{Poisson process}
The Poisson process $\{N_t\}_{t \geq 0}$ is a pure jump L\'evy process, whose increments follow a Poisson 
distribution with parameter $\lambda > 0$. The moment generating function is $(\phi(z))^t = (\exp\{\lambda(e^t - 1)\})^t,$ 
so the Poisson process of intensity parameter $\lambda$ is umbrally represented by the family 
of umbrae $\{t \punt \lambda \punt \beta\}_{t \geq 0}.$

Thanks to Theorem \ref{UTSH2}, the polynomials $Q_k(x,\lambda t) = E[(x - t \punt \lambda \punt \beta)^k]$ are 
time-space harmonic with respect to the Poisson process $\{N_t\}_{t \geq 0}$.

Proposition \ref{aaa} states that also the Poisson-Charlier polynomials $\{\widetilde{C}_k(x,\lambda t)\}$ are time-space harmonic 
with respect to the Poisson process $\{N_t\}_{t \geq 0}.$  

\begin{prop} \label{aaa}
We have $\widetilde{C}_k(x,\lambda t) = \sum_{j=1}^k s(k,j) Q_j(x,\lambda t),$
where $s(k,j)$ are the Stirling numbers of the first kind.
\end{prop}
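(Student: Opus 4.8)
\textbf{Proof Proposal for Proposition \ref{aaa}.}

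The plan is to identify the defining property of the Poisson-Charlier polynomials and then show that the stated linear combination of the $Q_j(x,\lambda t)$ reproduces it. First I would recall that the classical Poisson-Charlier polynomials $\widetilde{C}_k(x,\lambda t)$ are defined (up to normalization) through their generating function $\sum_{k \geq 0} \widetilde{C}_k(x,\lambda t) z^k/k! = e^{-\lambda t z}(1+z)^x$, or equivalently via their explicit expansion in the falling factorials $(x)_j$. The key observation is that the falling factorial $(x)_j = x(x-1)\cdots(x-j+1)$ and the ordinary power $x^j$ are related precisely by the Stirling numbers: $(x)_j = \sum_{i} s(j,i)\, x^i$ and conversely $x^k = \sum_j S(k,j)(x)_j$ with the second kind numbers. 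So the appearance of $s(k,j)$ in the statement signals that the $\widetilde{C}_k$ are naturally indexed by falling factorials while the $Q_j$ carry ordinary powers, and the Stirling transform mediates between them.

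The central step is to compute $Q_j(x,\lambda t)$ umbrally and match it against the falling-factorial form of the Charlier polynomials. By Theorem \ref{UTSH2}, $Q_j(x,\lambda t) = E[(x - t \punt \lambda \punt \beta)^j]$, and by Proposition \ref{bellcompl} (with the triplet specialized to the pure-jump Poisson case, where the only nonzero cumulants come from $\lambda \punt \beta$) the cumulant umbra of $x - t\punt\lambda\punt\beta$ has moments $x - \lambda t, -\lambda t, \ldots, -\lambda t$, since every moment of the Bell umbra $\beta$ is $1$ and $f(\lambda\punt\beta,z)=\exp\{\lambda(e^z-1)\}$ gives all cumulants equal to $\lambda$. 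Thus $Q_j(x,\lambda t)$ is the complete Bell polynomial evaluated at these constant cumulants, which is a known closed form for a shifted Poisson-type Appell sequence. I would then invoke the standard identity expressing the Charlier polynomials in terms of these Appell polynomials: because $\widetilde{C}_k$ lives on the falling-factorial basis and $Q_j$ on the power basis, applying the first-kind Stirling transform $\widetilde{C}_k = \sum_{j=1}^k s(k,j) Q_j$ converts one basis representation into the other, which is exactly the claimed formula.

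The cleanest route, and the one I would actually carry out, is through Remark \ref{propcomb}: since every linear combination of the time-space harmonic family $\{Q_j(x,\lambda t)\}$ is again time-space harmonic, it suffices to verify the single algebraic identity relating generating functions (or equivalently the coefficient identity in the Stirling numbers), after which the time-space harmonicity of $\widetilde{C}_k$ is automatic. Concretely, I would show that $\sum_{j=1}^k s(k,j)\, E[(x - t\punt\lambda\punt\beta)^j]$ equals the known umbral expression for $\widetilde{C}_k(x,\lambda t)$ by comparing generating functions: the singleton-to-power passage $f(\chi,z)=1+z$ underlying $\beta\punt\chi\equiv u$ is precisely what turns $e^z-1$ in the Poisson cumulant into the $\log(1+z)$ that produces the Stirling numbers of the first kind. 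The main obstacle will be bookkeeping the normalization and the exact umbral identity for $\widetilde{C}_k$ — one must confirm that the convention for $\widetilde{C}_k$ used here matches $E$ applied to the falling-factorial umbral polynomial in $x - t\punt\lambda\punt\beta$, so that the Stirling coefficients come out as $s(k,j)$ rather than their signless or second-kind counterparts. Once that convention is pinned down, the identity follows from the standard inverse relationship between the two Stirling-number triangles.
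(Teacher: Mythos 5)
Your approach is essentially the paper's: both rest on recognizing that $\widetilde{C}_k(x,\lambda t)$ is the $k$-th falling-factorial moment of the umbra $x - t\punt\lambda\punt\beta$, after which the first-kind Stirling expansion $(y)_k = \sum_j s(k,j)\,y^j$ gives the claim. Be aware, though, that the step you defer as ``bookkeeping the normalization'' --- namely confirming $\widetilde{C}_k(x,\lambda t) = E[(x - t\punt\lambda\punt\beta)_k]$ --- is the entire content of the paper's proof, which obtains it from the generating function $e^{-\lambda t z}(1+z)^x$ via the equivalence $x\punt\chi - t\punt\lambda\punt u \equiv (x - t\punt\lambda\punt\beta)\punt\chi$ together with $E[(\alpha\punt\chi)^k]=E[(\alpha)_k]$; your remark that $f(\chi,z)=1+z$ is what converts the Poisson exponential into $\log(1+z)$ and hence produces the first-kind Stirling numbers is exactly this mechanism, so you have identified the right idea even if you did not carry it out.
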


\begin{proof}
Recall that the Poisson-Charlier polynomials $\widetilde{C}_k(x,\lambda t)$ have generating function 
$$\sum_{k\geq 0}\widetilde{C}_k(x,\lambda t)\frac{z^k}{k!}=e^{-\lambda tz}(1+z)^x,$$ so  
$\widetilde{C}_k(x,\lambda t) = E[(x \punt \chi - t \punt \lambda \punt u)^k].$
Since $x \punt \chi - t \punt \lambda \punt u \equiv (x - t \punt \lambda \punt \beta) \punt \chi$ and by recalling that 
$E[(\alpha \punt \chi)^k]=E[(\alpha)_k],$ see \cite{Dinardoeurop}, we have
\begin{align*}
\widetilde{C}_k(x,\lambda t) = E[(x - t \punt \lambda \punt \beta)_k] = \sum_{j = 0}^k s(k,j) E[(x 
- t \punt \lambda \punt \beta)^k].
\end{align*}
\end{proof}
\subsection{Gamma process}
The Gamma process $\{G_t(\lambda, b)\}_{t \geq 0}$ with scale parameter $\lambda > 0$ and shape parameter 
$b > 0$ is a L\'evy process with stationary, independent and Gamma-distributed increments. If we set $b=1,$ 
the moment generating function of the Gamma process is $(\phi(z))^t = [(1 - z)^{-\lambda}]^t.$ Thus the 
umbral representation of the Gamma process $\{G_t(\lambda, 1)\}_{t \geq 0}$ is given by the family of umbrae 
$\{(\lambda t) \punt \bar{u}\}_{t \geq 0},$ where $\bar{u}$ is the \emph{boolean unity}.

There are two families of polynomials time-space harmonic with respect to Gamma processes, according to the value of the scale parameter 
$\lambda$: the \emph{Laguerre polynomials} $\{\mathcal{L}_k^{t-k}(x)\}$ and the \emph{actuarial polynomials} 
$\{g_k(x, \lambda t)\}.$

As regards the former, we have  
\begin{equation}
(-1)^k k! \mathcal{L}_k^{t-k}(x) = E[(x + t \punt (-\chi))^k], \qquad k=0,1,2,\ldots
\label{laguerre}
\end{equation}
since the Laguerre polynomials $\{\mathcal{L}_k^{t-k}(x)\}$ have generating 
function $\sum_{k\geq 0}(-1)^k \mathfrak{L}_k^{t-k}(x)z^k = (1-z)^t e^{zx}.$ 

\begin{thm}
The Laguerre polynomials $\{\mathcal{L}_k^{t-k}(x)\}_{t \geq 0}$ are time-space harmonic with respect to the 
Gamma process $\{G_t (1, 1)\}_{t \geq 0}.$
\end{thm}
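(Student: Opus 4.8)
The plan is to match the Laguerre family given in (\ref{laguerre}) with the canonical time-space harmonic polynomials attached to the Gamma process by Theorem \ref{UTSH2}, and then to conclude by the linearity recorded in Remark \ref{propcomb}. Since $\{G_t(1,1)\}_{t\geq 0}$ is umbrally represented by $\{t\punt\bar{u}\}_{t\geq 0}$, Theorem \ref{UTSH2} applied with $\alpha=\bar{u}$ shows that the polynomials $Q_k(x,t)=E[(x-t\punt\bar{u})^k]$ are time-space harmonic with respect to this process. It therefore suffices to exhibit each $\mathcal{L}_k^{t-k}(x)$ as a scalar multiple of $Q_k(x,t)$.

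First I would read the minus sign $x-t\punt\bar{u}$ as $x+(-t)\punt\bar{u}$, exactly as the Brownian and Poisson examples do. The key step is then the umbral equivalence $(-t)\punt\bar{u}\equiv t\punt(-\chi)$, which I would check through generating functions: $f((-t)\punt\bar{u},z)=[f(\bar{u},z)]^{-t}=(1-z)^t$, while $f(-\chi,z)=f(\chi,-z)=1-z$ gives $f(t\punt(-\chi),z)=(1-z)^t$ as well. (Equivalently, one notes $\bar{u}\equiv -1\punt(-\chi)$ and uses the associativity of the scalar dot-product.) Hence $x-t\punt\bar{u}\equiv x+t\punt(-\chi)$, so that $Q_k(x,t)=E[(x+t\punt(-\chi))^k]$.

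Invoking now the umbral representation (\ref{laguerre}), namely $E[(x+t\punt(-\chi))^k]=(-1)^k k!\,\mathcal{L}_k^{t-k}(x)$, I obtain
\[
\mathcal{L}_k^{t-k}(x)=\frac{(-1)^k}{k!}\,Q_k(x,t).
\]
This displays $\mathcal{L}_k^{t-k}$ as a (single-term, $x$- and $t$-independent) linear combination of the time-space harmonic family $\{Q_k(x,t)\}_{t\geq 0}$, so Remark \ref{propcomb} immediately yields that $\{\mathcal{L}_k^{t-k}(x)\}_{t\geq 0}$ is time-space harmonic with respect to $\{G_t(1,1)\}_{t\geq 0}$, as claimed.

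The computation is short, and the only delicate point is the equivalence $(-t)\punt\bar{u}\equiv t\punt(-\chi)$: one must keep the scalar dot-product $(-t)\punt(\cdot)$ distinct from the inverse umbra $-1\punt(\cdot)$ and from the reflection $-\chi$, since several \emph{minus} operations occur simultaneously. Matching generating functions removes the ambiguity, after which the statement follows directly from Theorem \ref{UTSH2} and Remark \ref{propcomb}.
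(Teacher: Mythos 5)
Your argument is correct and follows essentially the same route as the paper: identify $\{G_t(1,1)\}$ with $\{t\punt\bar{u}\}$, apply Theorem \ref{UTSH2}, establish $-t\punt\bar{u}\equiv t\punt(-\chi)$ (the paper states this via $-1\punt\bar{u}\equiv-\chi$, you verify it by matching generating functions, which amounts to the same thing), and conclude from (\ref{laguerre}) that $Q_k(x,t)=(-1)^k k!\,\mathcal{L}_k^{t-k}(x)$. No gaps; your explicit generating-function check of the key equivalence is a welcome bit of extra care.
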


\begin{proof}
Theorem \ref{UTSH2} implies that the polynomials $Q_k(x,t) = E[(x - t \punt \bar{u})^k]$ are time-space harmonic 
with respect to the Gamma process $\{G_t(1,1)\}_{t \geq 0}.$ Moreover, we have $-1 \punt \bar{u} \equiv -\chi,$ so 
$-t \punt \bar{u} \equiv t \punt (-\chi)$ and 
$x - t \punt \bar{u} \equiv x + t \punt (-\chi).$ Then, thanks to (\ref{laguerre}), we have $Q_k(x,t) = k! 
(-1)^k \mathcal{L}_k^{t-k}(x).$
\end{proof}

For the latter, Roman \cite{Roman} defines the class of the actuarial polynomials as the sequence 
of polynomials with generating function
\begin{equation}
\sum_{k \geq 0} g_k(x, \lambda t) \frac{z^k}{k!} = \exp\{\lambda t z + x (1 - e^z)\}.
\label{act_gf}
\end{equation}
To get the umbral expression of $g_k(x, \lambda t)$ we use the umbral L\'evy-Sheffer systems.
Recall that a L\'evy-Sheffer system \cite{Dinoli2} is a sequence of polynomials $\{R_k(x,t)\}$ such that
\begin{equation}
\sum_{k \geq 0} R_k(x,t) \frac{z^k}{k!} = (f(z))^t \exp\{x u(z)\},
\label{LS_gf}
\end{equation}
where
$f(z)$ and $u(z)$ are analytic in the neighborhood of $z = 0,$ $u(0) = 0,$ $f(0) = 1,$ $u'(0) \neq 0$
and $1/f(\tau(z))$ is an infinitely divisible moment generating function, with
$\tau(z)$ such that $\tau(u(z)) = z.$ If $f(z) = f(\alpha,z)$ and $u(z) = f(\gamma,z) - 1,$ then $R_k(x,t) 
= E[(t \punt \alpha + x \punt \beta \punt \gamma)^k],$ for all nonnegative integers $k.$
By comparing (\ref{LS_gf}) with (\ref{act_gf}), we obtain  $\alpha \equiv (\lambda t) \punt u$ and 
$\gamma \equiv (\chi \punt (-\chi))^{{\scriptscriptstyle <-1>}},$ where $(\chi \punt 
(-\chi))^{\scriptscriptstyle <-1>}$ is the compositional inverse of the umbra $\chi \punt (-\chi).$ 
This leads to the umbral version of the actuarial polynomials, that is, for all nonnegative integers $k,$
\begin{equation}
g_k(x, \lambda t) = E\left\{\left[ \lambda t + x \punt \beta \punt 
(\chi \punt (-\chi))^{{\scriptscriptstyle <-1>}} \right]^k\right\}.
\label{act}
\end{equation}

\begin{thm}
The actuarial polynomials $\{g_k(x, \lambda t)\}_{t \geq 0}$ are time-space harmonic with respect to the 
Gamma process $\{G_t (\lambda, 1)\}_{t \geq 0}.$
\end{thm}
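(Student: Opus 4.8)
The plan is to verify the time-space harmonic property of Definition \ref{condeval} directly, using the L\'evy splitting of the Gamma umbra together with one composition-umbra identity. Write $q(t) = (\lambda t) \punt \bar u$ for the umbral Gamma process $\{G_t(\lambda,1)\}_{t \geq 0}$ and $\gamma = (\chi \punt (-\chi))^{{\scriptscriptstyle <-1>}}$, so that $g_k(x,\lambda t) = E\{[\lambda t + x \punt \beta \punt \gamma]^k\}$ by (\ref{act}). For $0 \le s \le t$, stationarity and independence of the increments give $q(t) \equiv q(s) + r$, where the increment umbra $r \equiv (\lambda(t-s)) \punt \bar u$ is uncorrelated with $q(s)$; this is read off from the generating functions, since $f(q(s),z)\,f(r,z) = (1-z)^{-\lambda s}(1-z)^{-\lambda(t-s)} = (1-z)^{-\lambda t} = f(q(t),z)$.

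The heart of the argument is the identity $r \punt \beta \punt \gamma \equiv -\lambda(t-s)$, i.e. composing the increment umbra with $\gamma$ collapses it to a scalar. Here the precise shape of $\gamma$ enters: since $f(-\chi,z) = f(\chi,-z) = 1 - z$, we have $f(\chi \punt (-\chi),z) = 1 + \log(1-z)$, and taking the compositional inverse gives $f(\gamma,z) - 1 = 1 - e^z$. Using the composition-umbra generating function $f(r \punt \beta \punt \gamma, z) = f(r, f(\gamma,z)-1)$, I then obtain
$$f(r \punt \beta \punt \gamma, z) = \big(1 - (1 - e^z)\big)^{-\lambda(t-s)} = e^{-\lambda(t-s) z},$$
which is exactly the generating function of the scalar $-\lambda(t-s)$. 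This is the umbral form of the L\'evy-Sheffer compatibility $\phi(u(z)) = 1/f(z)$ between the Gamma moment generating function $\phi(z) = (1-z)^{-\lambda}$ and the pair $(f,u)$ attached to the actuarial system.

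It then remains to combine these facts. By the distributive property of summation with respect to the dot-product recalled after (\ref{props}), applied with $\vartheta = \beta \punt \gamma$, one gets $q(t) \punt \beta \punt \gamma \equiv q(s) \punt \beta \punt \gamma + r \punt \beta \punt \gamma$, the two summands being uncorrelated. Substituting $x \to q(t)$ in (\ref{act}) and using the key identity so that the increment part contributes only the deterministic shift $-\lambda(t-s)$, the conditional evaluation with respect to $q(s)$ yields
\begin{align*}
E\left[g_k(q(t),\lambda t) \,\,\vline\,\, q(s)\right]
&= E\left\{[\lambda t + q(s) \punt \beta \punt \gamma + r \punt \beta \punt \gamma]^k \,\,\vline\,\, q(s)\right\} \\
&= E\left\{[\lambda s + q(s) \punt \beta \punt \gamma]^k \,\,\vline\,\, q(s)\right\} = g_k(q(s), \lambda s),
\end{align*}
since $\lambda t - \lambda(t-s) = \lambda s$ and the last line is (\ref{act}) read with the indeterminate $x$ replaced by $q(s)$, which $E(\,\cdot\,\vline\, q(s))$ treats as a free variable. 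This is precisely Definition \ref{condeval}.

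The step I expect to be the main obstacle is the bookkeeping in the final display: one must justify that substituting the process umbra $q(t)$ for the scalar $x$ turns $x \punt \beta \punt \gamma$ into the composition umbra $q(t) \punt \beta \punt \gamma$ (legitimate because $q(t)$ is uncorrelated with the internal umbrae $\beta,\gamma$), and that $E(\,\cdot\,\vline\, q(s))$ genuinely freezes $q(s)$ as an indeterminate while evaluating the increment $r$ and the internal umbrae. After the distributive splitting, the factor $r \punt \beta \punt \gamma \equiv -\lambda(t-s)$ is a scalar uncorrelated with $q(s)$, so the uncorrelation property of Definition \ref{condeval1}(ii) lets it pass through the conditional evaluation as a pure shift, leaving exactly $g_k(q(s),\lambda s)$.
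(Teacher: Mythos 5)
Your proof is correct, but it follows a genuinely different route from the paper's. The paper never touches the conditional evaluation directly: it expands $\lambda t + x \punt \beta \punt (\chi \punt (-\chi))^{{\scriptscriptstyle <-1>}} \equiv (x + (\lambda t) \punt (-\chi)) \punt \beta \punt (\chi \punt (-\chi))^{{\scriptscriptstyle <-1>}}$, applies (\ref{comp}) to write $g_k(x,\lambda t) = \sum_{j=1}^k Q_j(x,t)\, B_{k,j}(m_1,\ldots,m_{k-j+1})$ with \emph{constant} coefficients (using $\bar{u} \equiv -1 \punt (-\chi)$ to recognize $Q_j(x,t)=E[(x-(\lambda t)\punt\bar u)^j]$), and then concludes by Remark \ref{propcomb} plus Theorem \ref{UTSH2}. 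You instead verify Definition \ref{condeval} from scratch via the increment splitting $q(t) \equiv q(s) + r$ and the collapse identity $r \punt \beta \punt \gamma \equiv -\lambda(t-s)$; your generating-function computation ($f(\gamma,z)-1 = 1-e^z$, hence $f(r\punt\beta\punt\gamma,z)=e^{-\lambda(t-s)z}$) is correct, and it is in fact the same structural phenomenon the paper exploits in disguise, since $(-\chi)\punt\beta\punt\gamma \equiv u$ is what lets the paper absorb the scalar $\lambda t$ into the composition. What the paper's route buys is that all the delicate interaction between conditional evaluation and auxiliary umbrae is quarantined inside Theorem \ref{UTSH2} (proved in \cite{Dinoli2}), and one additionally gets the explicit connection constants $B_{k,j}(m_1,\ldots,m_{k-j+1})$ between the actuarial polynomials and the canonical family $\{Q_j\}$. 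What your route buys is a self-contained martingale-style argument that makes the L\'evy--Sheffer compatibility visible; its cost is exactly the bookkeeping you flag at the end, namely that $q(t)\equiv q(s)+r$ is only a similarity of moments, and that $E(\,\cdot\,\vline\, q(s))$ must be known to freeze $q(s)$ inside the auxiliary umbra $q(s)\punt\beta\punt\gamma$ while evaluating $r$ and the internal umbrae --- facts that are not established in this paper but are precisely the content of the machinery behind Theorem \ref{UTSH2}. To make your argument airtight within this framework you should either cite that machinery explicitly or, more economically, convert your final display into the paper's reduction to $\{Q_j(x,t)\}$ and invoke Remark \ref{propcomb}.
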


\begin{proof}
By virtue of Theorem \ref{UTSH2}, $Q_k(x,t) = E[(x - (\lambda t) \punt \bar{u})^k]$ are time-space 
harmonic polynomials for all $k \geq 0$ with respect to the Gamma process $\{G_t (\lambda, 1)\}_{t \geq 0}.$ 
On the other hand, $\lambda t + x \punt \beta \punt (\chi \punt (-\chi))^{{\scriptscriptstyle <-1>}} 
\equiv (x + (\lambda t) \punt (-\chi)) \punt \beta \punt (\chi \punt (-\chi))^{{\scriptscriptstyle <-1>}}.$
Then, by virtue of (\ref{act}) and (\ref{comp}), we have
\begin{align*}
g_k(x, \lambda t) = \sum_{j = 1}^k E[(x + (\lambda t) \punt (-\chi))^j] B_{k,j}(m_1, \ldots, m_{k - j + 1}),
\end{align*}
where $m_i = E[(\chi \punt (-\chi))^{{\scriptscriptstyle <-1>}})^i].$ Observe that $\bar{u} 
\equiv -1 \punt (-\chi),$ thus
\begin{equation*}
g_k(x, \lambda t) = \sum_{j = 1}^k E[(x - (\lambda t) \punt \bar{u})^j] B_{k,j}(m_1, \ldots, m_{k - j + 1}) 
= \sum_{j = 1}^k Q_k(x,t) B_{k,j}(m_1, \ldots, m_{k - j + 1}).
\end{equation*}
The result follows from Remark \ref{propcomb}.
\end{proof}
\subsection{Pascal process} 
Let $\{Pa(t,p)\}_{t \geq 0}$ be a Pascal process, that is, a L\'evy process whose increments have Pascal 
distribution with mean $t d,$ where $d=p/q$ and $p + q = 1.$ As the moment generating function of the 
Pascal process is $(\phi(z))^t = [q/(1 - p e^z)]^t,$ with some calculations we obtain that a Pascal process 
is umbrally represented by the family of umbrae $\{t \punt \bar{u} \punt d \punt \beta\}_{t \geq 0},$ where 
$\bar{u}$ is the boolean unity. By virtue of Theorem  \ref{UTSH2}, the time-space harmonic polynomials with 
respect to the Pascal process are $Q_k(x,t) = E[(x - t \punt \bar{u} \punt d \punt \beta)^k]$ for all 
nonnegative integers $k.$ 

Consider the family of Meixner polynomials of the first kind $\{M_k(x,t,p)\}$ \cite{Schoutens} such that
\begin{equation}
\sum_{k \geq 0} (-1)^k (t)_k M_k(x,t,p) \frac{z^k}{k!} = \left(1 + \frac{z}{p}\right)^x (1 + z)^{- x - t}.
\label{mgf_meixner}
\end{equation}

From (\ref{mgf_meixner}), the umbral expression of the Meixner polynomials of the first kind is
\begin{equation}
(-1)^k (t)_k M_k(x,t,p) = E\left\{\left[x \punt \left(-1 \punt \chi + \frac{\chi}{p}\right) 
- t \punt \chi\right]^k\right\}.
\label{Meixner}
\end{equation}

\begin{thm}
The Meixner polynomials of the first kind are time-space harmonic with respect to the Pascal process 
$\{Pa(p,t)\}_{t \geq 0}.$
\end{thm}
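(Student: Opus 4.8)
The plan is to follow the template already used for the Poisson--Charlier and actuarial cases: rewrite the umbral expression (\ref{Meixner}) as a dot-product of the canonical Pascal umbra, and then expand that moment, with \emph{constant} coefficients, into the polynomials $Q_j(x,t)=E[(x-t\punt\bar{u}\punt d\punt\beta)^j]$ that Theorem \ref{UTSH2} already guarantees to be time-space harmonic with respect to the Pascal process. The conclusion is then immediate from Remark \ref{propcomb}. First I would set $\nu\equiv-1\punt\chi+\tfrac{\chi}{p}$, the umbra carrying the variable $x$ in (\ref{Meixner}), and record $f(\nu,z)=(1+z/p)/(1+z)$, obtained by multiplying $f(-1\punt\chi,z)=1/(1+z)$ and $f(\chi/p,z)=1+z/p$.

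The heart of the argument is the equivalence $(\bar{u}\punt d\punt\beta)\punt\nu\equiv\chi$. To prove it I would use the dot-product generating function $f(\gamma\punt\alpha,z)=f(\gamma,\log f(\alpha,z))$, which follows from $\alpha\equiv\beta\punt\kappa_{{\scriptscriptstyle\alpha}}$ and the composition formula (\ref{comp}) together with $f(\kappa_{{\scriptscriptstyle\alpha}},z)-1=\log f(\alpha,z)$, and the one-step Pascal generating function
\begin{equation*}
f(\bar{u}\punt d\punt\beta,w)=\frac{1}{1-d(e^{w}-1)}=\frac{q}{1-p\,e^{w}},
\end{equation*}
which itself uses $f(\bar{u},w)=1/(1-w)$, $f(d\punt\beta,w)=\exp\{d(e^{w}-1)\}$ and $d=p/q$, $p+q=1$. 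Substituting $w=\log f(\nu,z)$ gives $p\,e^{w}=(p+z)/(1+z)$, hence $1-p\,e^{w}=q/(1+z)$ and so $f((\bar{u}\punt d\punt\beta)\punt\nu,z)=1+z$. This is precisely where the special Pascal parameters force the collapse to the singleton $\chi$, and I expect this cancellation to be the main obstacle, since every other step is purely formal.

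Once the equivalence is secured, the distributivity $(\alpha+\gamma)\punt\vartheta\equiv\alpha\punt\vartheta+\gamma\punt\vartheta$, the associativity $(t\punt\gamma)\punt\nu\equiv t\punt(\gamma\punt\nu)$, and the behaviour of the inverse under dot-product (note $\vareps\punt\nu\equiv\vareps$, whence $(-1\punt\gamma)\punt\nu\equiv-1\punt(\gamma\punt\nu)$) give $(t\punt\bar{u}\punt d\punt\beta)\punt\nu\equiv t\punt\chi$ and therefore
\begin{equation*}
x\punt\nu-t\punt\chi\equiv(x-t\punt\bar{u}\punt d\punt\beta)\punt\nu .
\end{equation*}
By (\ref{Meixner}) this means $(-1)^k(t)_k M_k(x,t,p)=E\{[(x-t\punt\bar{u}\punt d\punt\beta)\punt\nu]^k\}$.

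Finally I would expand this moment first by the dot-product formula and then by the Stirling numbers of the first kind. Writing $n_i=E[\nu^i]$ and using $E[(x-t\punt\bar{u}\punt d\punt\beta)_j]=\sum_i s(j,i)\,Q_i(x,t)$, I obtain
\begin{equation*}
(-1)^k(t)_k M_k(x,t,p)=\sum_{j=1}^{k}B_{k,j}(n_1,\ldots,n_{k-j+1})\sum_{i=1}^{j}s(j,i)\,Q_i(x,t),
\end{equation*}
that is a linear combination $\sum_i c_{ki}Q_i(x,t)$ with the constants $c_{ki}=\sum_j s(j,i)B_{k,j}(n_1,\ldots,n_{k-j+1})$. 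This is the exact analogue of the Poisson--Charlier proof, which is the degenerate case $\nu\equiv\chi$ (so $n_1=1$, $n_i=0$ for $i\ge2$): the only new feature produced by the non-singleton $\nu$ is the extra Bell-polynomial layer $B_{k,j}(n_\bullet)$. Remark \ref{propcomb} then shows that the polynomials appearing in (\ref{Meixner}), namely $(-1)^k(t)_k M_k(x,t,p)$, are time-space harmonic with respect to the Pascal process, which is the assertion of the theorem.
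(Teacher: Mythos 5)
Your proof is correct and follows essentially the same route as the paper: both factor the Meixner umbra through the Pascal umbra $t \punt \bar{u} \punt d \punt \beta$ composed with $\nu \equiv -1 \punt \chi + \chi/p$, expand via Bell polynomials into the harmonic polynomials $Q_j(x,t)$, and conclude by Remark \ref{propcomb}. The only differences are cosmetic: you verify the key equivalence $(\bar{u} \punt d \punt \beta) \punt \nu \equiv \chi$ explicitly by generating functions (where the paper asserts the analogous factorization via the L\'evy--Sheffer machinery), and you expand with the dot-product moment formula plus Stirling numbers of the first kind, where the paper applies the composition formula (\ref{comp}) to the cumulant umbra $\chi \punt \nu$ directly.
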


\begin{proof}
The Meixner polynomials of the first kind form a L\'evy-Sheffer system, so they are represented 
by the polynomial umbra $x \punt \beta \punt (\chi \punt (-1 \punt \chi + \chi/p)) + t \punt (-1 \punt \chi),$ 
with $E[(-1 \punt \chi + \chi/p)] \neq 0.$ This hypothesis guarantees that the 
compositional inverse umbra exists, so
\begin{equation*}
x \punt \left(-1 \punt \chi + \frac{\chi}{p}\right) - t \punt \chi \equiv  (x + t \punt (-1 \punt \chi) 
\punt \beta \punt \bar{u} \punt d \punt \beta) \punt \beta \punt \left(\chi \punt \left(-1 \punt \chi 
+ \frac{\chi}{p}\right)\right).
\end{equation*}
Thus, by (\ref{comp}) and (\ref{Meixner}), the Meixner polynomials of first kind can be written in the following way
\begin{align*}
(-1)^k (t)_k M_k(x,t,p) & = \sum_{j = 1}^k E[(x - t \punt \bar{u} \punt d \punt \beta)^j] 
B_{k,j}(m_1, \ldots, m_{k - j -1})\\ 
& = \sum_{j = 1}^k Q_k(x,t) B_{k,j}(m_1, \ldots, m_{k - j -1}),
\end{align*}
where $m_i = E[\{\chi \punt (-1 \punt \chi + \chi/p)\}^i].$ The result follows, thanks to Remark \ref{propcomb}.
\end{proof}
\subsection{Random walks}
The results in the literature involving the polynomials we are going to introduce refer to
an integer parameter $n.$ In order to highlight their time-space harmonic property, we can consider 
the discrete version of a L\'evy process, that is a random walk $S_n = X_1 + X_2+ \cdots 
+ X_n,$ with $\{X_i\}$ independent and identically distributed random variables.
For the symbolic representation
of a L\'evy process we have dealt with, a random walk is umbrally represented by $n \punt \alpha.$ The generality of the 
symbolic approach shows that if the parameter $n$ is replaced by $t,$ that is if the random walk is 
replaced by a L\'evy process, more general classes of polynomials can be recovered for which many of 
the properties here introduced still hold.

\noindent{\sl \textbf{Bernoulli polynomials}.} 
The Bernoulli polynomials $\{B_k(x,n)\}$ are defined by the generating function \cite{Roman}
$$\sum_{k \geq 0} B_k(x,n) \frac{z^k}{k!} = \left( \frac{z}{e^z-1} \right)^n e^{z x}.$$ 
Therefore we have $B_k(x,n) = E[(x + n \punt \iota)^k]$ for all nonnegative integers $k.$
\begin{thm}\label{bernpol}
The Bernoulli polynomials $\{B_k(x,n)\}$ are time-space harmonic with respect to the random walk
$\{n \punt (-1 \punt \iota)\}_{n \geq 0}.$ 
\end{thm}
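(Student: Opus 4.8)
The plan is to realize the Bernoulli polynomials as the very family of time-space harmonic polynomials produced by Theorem \ref{UTSH2}, for an appropriate choice of the base umbra, and then to quote that theorem directly. Concretely, I would take $\alpha \equiv -1 \punt \iota$, so that the driving random walk $\{n \punt \alpha\}_{n \geq 0}$ is exactly the family $\{n \punt (-1 \punt \iota)\}_{n \geq 0}$ appearing in the statement. Theorem \ref{UTSH2} then guarantees that the polynomials $Q_k(x,n) = E[(x - n \punt (-1 \punt \iota))^k]$ are time-space harmonic with respect to this random walk for every nonnegative integer $k$. It therefore remains only to check that $Q_k(x,n)$ coincides with $B_k(x,n) = E[(x + n \punt \iota)^k]$.

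The single nontrivial point is the umbral equivalence $x - n \punt (-1 \punt \iota) \equiv x + n \punt \iota$, which, reading $x - n \punt (-1\punt\iota)$ as $x + \big((-n)\punt(-1\punt\iota)\big)$ exactly as in the Laguerre case (where $x - t \punt \bar u \equiv x + t \punt (-\chi)$), amounts to showing $-n \punt (-1 \punt \iota) \equiv n \punt \iota$. I would verify this at the level of generating functions. Since $f(\iota,z) = z/(e^z-1)$, the inverse umbra satisfies $f(-1 \punt \iota, z) = (e^z-1)/z$, whence
\[
f\big(-n \punt (-1 \punt \iota),\, z\big) = \left(\frac{e^z-1}{z}\right)^{-n} = \left(\frac{z}{e^z-1}\right)^{n} = \big(f(\iota,z)\big)^{n} = f(n \punt \iota,\, z),
\]
which gives the claimed equivalence (equivalently, this is the associativity $t \punt (s \punt \alpha) \equiv (ts) \punt \alpha$ of the dot-product in the scalar factors, with $t=-n$, $s=-1$).

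Combining the two steps yields $Q_k(x,n) = E[(x + n \punt \iota)^k] = B_k(x,n)$, so the Bernoulli polynomials are precisely the time-space harmonic family attached to $\{n \punt (-1 \punt \iota)\}_{n \geq 0}$, and the conclusion follows from Theorem \ref{UTSH2}. I do not expect a genuine obstacle: the only place demanding care is the bookkeeping of the sign in the dot-product multiplier, confirming that inverting twice returns $n \punt \iota$ rather than introducing a spurious sign, which the generating-function computation above settles cleanly.
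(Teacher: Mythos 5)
Your proposal is correct and follows essentially the same route as the paper: apply Theorem \ref{UTSH2} with base umbra $-1 \punt \iota$ and then identify $Q_k(x,n)$ with $B_k(x,n)$ via the equivalence $n \punt (-1 \punt \iota) \equiv -n \punt \iota$, which the paper simply asserts and you verify explicitly through generating functions. The only cosmetic difference is that the paper also notes the probabilistic interpretation of $-1\punt\iota$ as a uniform random variable on $[0,1]$, which is not needed for the argument.
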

\begin{proof}
Let us consider the random walk $S_n = X_1 + X_2 + \cdots + X_n$  such that $X_1, X_2, \ldots, X_n$ 
are $n$ independent and identically distributed random variables with uniform distribution on the interval 
$[0,1].$ Since $X_i$ is umbrally represented by the umbra $-1 \punt \iota,$ for all $i = 1, \ldots, n,$ the 
random walk $S_n$ is umbrally represented by the family of auxiliary umbrae $\{n \punt (-1 \punt 
\iota)\}_{n \geq 0}.$ Theorem \ref{UTSH2} ensures that polynomials $Q_k(x,t) = E[(x - n \punt (-1 \punt 
\iota))^k]$ are time-space harmonic with respect to $S_n,$ for all $k \geq 0.$
On the other hand, $n \punt (-1 \punt \iota) \equiv - n \punt \iota,$ hence $E[(x - n \punt (-1 \punt \iota))^k] 
= E[(x + n \punt \iota)^k],$ that is, $B_k^{(n)}(x) = Q_k(x,n).$
\end{proof}
\noindent{\sl \textbf{Euler polynomials}.} 
The Euler polynomials $\{\mathcal{E}_k (x, n)\}$ are defined by the generating function \cite{Roman}
$$\sum_{k \geq 0} \mathcal{E}_k(x,n) \frac{z^k}{k!} = \left( \frac{2}{e^z+1} \right)^n e^{z x}.$$
Therefore we have
$\mathcal{E}_k(x,n) = E[\left(x + n \punt \left[ \frac{1}{2} \left( - 1 \punt u + \eta \right) \right]\right)^k]$ for all nonnegative integers $k.$
\begin{thm}
The Euler polynomials $\{\mathcal{E}_k (x, n)\}$ are time-space harmonic with respect to the random walk 
$\{n \punt \left[ \frac{1}{2} \left( - 1 \punt \eta + u \right) \right]\}_{n \geq 0}.$ 
\end{thm}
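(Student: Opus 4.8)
The plan is to follow verbatim the scheme used for the Bernoulli polynomials in Theorem \ref{bernpol}. Write $\omega = \frac{1}{2}(-1 \punt \eta + u)$ for the umbra indexing the random walk and $\theta = \frac{1}{2}(-1 \punt u + \eta)$ for the umbra occurring in the umbral representation $\mathcal{E}_k(x,n) = E[(x + n \punt \theta)^k]$ recalled just above the statement. First I would identify the increment of the random walk by computing generating functions: since $f(-1 \punt \eta + u, z) = e^z/f(\eta,z) = (1 + e^{2z})/2$, and scaling an umbra by $\tfrac12$ replaces $z$ by $z/2$ in its generating function, we get $f(\omega, z) = (1 + e^z)/2$. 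This is exactly the moment generating function of a variable taking the values $0$ and $1$ each with probability $\tfrac12$. Thus $\omega$ umbrally represents a fair $0/1$ increment, and, exactly as in Theorem \ref{bernpol}, the family $\{n \punt \omega\}_{n \geq 0}$ represents the random walk $S_n = X_1 + \cdots + X_n$ built from such increments.

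Next I would invoke Theorem \ref{UTSH2}: for every nonnegative integer $k$ the polynomials $Q_k(x,n) = E[(x - n \punt \omega)^k]$ are time-space harmonic with respect to $\{n \punt \omega\}_{n \geq 0}$. It then remains only to recognize that $Q_k(x,n)$ coincides with $\mathcal{E}_k(x,n)$, which is a purely umbral identity requiring no martingale property.

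The crux of the argument is the observation that $\omega$ and $\theta$ are inverse to one another, precisely as $\iota$ and $-1 \punt \iota$ are in the Bernoulli case. Indeed $f(-1 \punt \omega, z) = 1/f(\omega,z) = 2/(1 + e^z) = f(\theta, z)$, so $-1 \punt \omega \equiv \theta$; raising the reciprocal to the $n$-th power in the generating function then gives $-n \punt \omega \equiv n \punt \theta$. Substituting this equivalence into $Q_k$, exactly as $n \punt (-1 \punt \iota) \equiv -n \punt \iota$ was used in Theorem \ref{bernpol}, yields $Q_k(x,n) = E[(x - n \punt \omega)^k] = E[(x + n \punt \theta)^k] = \mathcal{E}_k(x,n)$, which closes the proof.

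I expect the only delicate point to be the bookkeeping of the two scalar operations, the inverse $-1 \punt (\cdot)$ and the scaling by $\tfrac12$, as they interact with the generating functions; once the identities $f(\omega, z) = (1 + e^z)/2$ and its reciprocal $f(\theta,z) = 2/(1+e^z)$ are pinned down, the conclusion is immediate from Theorem \ref{UTSH2}. There is no genuine obstacle beyond this routine computation: all the conceptual content is carried by the single inverse relation $-1 \punt \omega \equiv \theta$, which is the Euler analogue of the symmetry underlying the Bernoulli case.
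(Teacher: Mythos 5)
Your proposal is correct and follows exactly the route the paper takes: the paper's own proof is a one-line reference to the Bernoulli argument, identifying the increment $\tfrac{1}{2}(-1\punt\eta+u)$ as a Bernoulli$(1/2)$ variable and concluding as in Theorem \ref{bernpol}. Your generating-function computations ($f(\omega,z)=(1+e^z)/2$, $-1\punt\omega\equiv\theta$, hence $Q_k(x,n)=E[(x+n\punt\theta)^k]=\mathcal{E}_k(x,n)$) correctly supply the details the paper leaves implicit.
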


\begin{proof}
Let us consider the random walk $S_n = X_1 + X_2 + \cdots + X_n$  such that $X_1, X_2, \ldots, X_n$ are 
$n$ independent and identically distributed Bernoulli random variables with parameter 
$1/2.$ The result follows by using arguments similar to the proof of Theorem \ref{bernpol}, 
as $X_i$ is umbrally represented by the umbra $ \frac{1}{2} \left( - 1 \punt \eta + u \right) .$ 
\end{proof}

\noindent{\sl \textbf{Krawtchouk polynomials}.}  The Krawtchouk polynomials $\{\mathcal{K}_k (x, p, n)\}$ are defined 
by the generating function \cite{Roman}
\begin{equation}
 \sum_{k \geq 0} \binom{n}{k} \mathcal{K}_k (x, p, n) z^k = \left(1 - \frac{1 - p}{p} z\right)^x (1 + z)^{n - x}.
\label{kraw_gf}
\end{equation}

The umbra with generating function (\ref{kraw_gf}) is $(n - x) \punt \chi 
+ x \punt (-\chi/d) \equiv n \punt \chi + x \punt (-1 \punt \chi -\chi/d),$ with $d = p/q$ and $p + q = 1.$ Then, 
for all nonnegative integers $k$ we have
\begin{equation}
\frac{n!}{(n - k)!}\mathcal{K}_k (x, p, n) = E\left\{\left[n \punt \chi + x \punt \left(-1 \punt \chi 
- \frac{\chi}{d}\right)\right]^k\right\}.
\label{kraw}
\end{equation}

\begin{thm}
The Krawtchouk polynomials are time-space harmonic with respect to the random walk $\{n \punt (-1 \punt \mu)\}_{n \geq 0},$ 
where $-1 \punt \mu$ is the umbral counterpart of a Bernoulli random variable with parameter $p.$
\end{thm}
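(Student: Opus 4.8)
The plan is to follow the same route as for the Meixner and actuarial polynomials, namely to exploit the L\'evy-Sheffer structure behind (\ref{kraw}). First I would read off the umbral data from (\ref{kraw_gf}). With $d=p/q,$ so that $1/d=q/p,$ the $x$-part of the umbra in (\ref{kraw}) can be written as $x\punt(-1\punt\chi-\chi/d)\equiv x\punt\beta\punt\gamma,$ where $\gamma=\chi\punt(-1\punt\chi-\chi/d)$ and hence $\beta\punt\gamma\equiv-1\punt\chi-\chi/d.$ Thus (\ref{kraw}) is exactly the L\'evy-Sheffer form
\[
\frac{n!}{(n-k)!}\mathcal{K}_k(x,p,n)=E[(n\punt\chi+x\punt\beta\punt\gamma)^k],
\]
with $f(z)=f(\chi,z)=1+z$ and $u(z)=f(\gamma,z)-1=\log\frac{1-(q/p)z}{1+z},$ so that $e^{u(z)}=\frac{1-(q/p)z}{1+z}.$

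Next I would identify the random walk. The umbra $-1\punt\mu$ with $f(-1\punt\mu,z)=q+pe^z=1+p(e^z-1)$ (equivalently $f(\mu,z)=1/(q+pe^z)$) is the umbral counterpart of a Bernoulli random variable of parameter $p$; therefore $S_n=X_1+\cdots+X_n$ with the $X_i$ independent Bernoulli($p$) is represented by $\{n\punt(-1\punt\mu)\}_{n\geq0},$ and by Theorem \ref{UTSH2} the polynomials $Q_j(x,n)=E[(x-n\punt(-1\punt\mu))^j]$ are time-space harmonic with respect to it. The step to verify here is that $\mu$ is the compositional inverse $\gamma^{\scriptscriptstyle <-1>}$: letting $\tau(z)=f(\gamma^{\scriptscriptstyle <-1>},z)-1$ denote the inverse of $u(z),$ a direct substitution gives $1+\tau(z)=1/(q+pe^z)=f(\mu,z),$ i.e. $\mu\equiv\gamma^{\scriptscriptstyle <-1>}$; this also shows that $1/f(\gamma^{\scriptscriptstyle <-1>})=q+pe^z$ is an infinitely divisible moment generating function, so the L\'evy-Sheffer hypotheses hold.

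The crux is the equivalence
\[
n\punt\chi+x\punt\beta\punt\gamma\equiv(x-n\punt(-1\punt\mu))\punt\beta\punt\gamma,
\]
which I would establish by comparing generating functions. The left-hand side has generating function $(1+z)^n\exp(x\,u(z)),$ while the right-hand side, being a composition umbra, has generating function $f(x-n\punt(-1\punt\mu),\,u(z))=e^{x u(z)}\,(q+pe^{u(z)})^{-n}.$ Since $q+pe^{u(z)}=q+p\,\frac{1-(q/p)z}{1+z}=\frac{1}{1+z},$ the factor $(q+pe^{u(z)})^{-n}$ equals $(1+z)^n$ and the two generating functions coincide.

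Finally, applying the composition-umbra moment formula (\ref{comp}) to the right-hand side yields
\[
\frac{n!}{(n-k)!}\mathcal{K}_k(x,p,n)=\sum_{j=1}^k E[(x-n\punt(-1\punt\mu))^j]\,B_{k,j}(m_1,\ldots,m_{k-j+1})=\sum_{j=1}^k Q_j(x,n)\,B_{k,j}(m_1,\ldots,m_{k-j+1}),
\]
with $m_i=E[\gamma^i]=E[\{\chi\punt(-1\punt\chi-\chi/d)\}^i].$ As the right-hand side is a linear combination of the time-space harmonic polynomials $Q_j(x,n),$ Remark \ref{propcomb} gives the time-space harmonicity of $\frac{n!}{(n-k)!}\mathcal{K}_k(x,p,n),$ and hence of $\mathcal{K}_k(x,p,n),$ with respect to $\{n\punt(-1\punt\mu)\}_{n\geq0}.$ The main obstacle is precisely the crux equivalence: it hinges on recognizing $\mu$ as the compositional inverse $\gamma^{\scriptscriptstyle <-1>}$ and on the identity $q+pe^{u(z)}=1/(1+z)$; once these are in hand, the rest is the same bookkeeping with partition and composition umbrae already used for the Meixner polynomials.
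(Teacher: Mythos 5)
Your proof is correct and takes essentially the same route as the paper: rewrite the Krawtchouk umbra as the composition umbra $\bigl(x + n \punt (\chi\punt(-1\punt\chi-\chi/d))^{\scriptscriptstyle <-1>}\bigr)\punt\beta\punt\bigl(\chi\punt(-1\punt\chi-\chi/d)\bigr)$, expand via (\ref{comp}) into Bell polynomials in the $Q_j(x,n)$, and conclude with Remark \ref{propcomb}. The only point worth noting is that your identification $(\chi\punt(-1\punt\chi-\chi/d))^{\scriptscriptstyle <-1>}\equiv\mu$, which your generating-function computation $1+\tau(z)=1/(q+pe^{z})=f(\mu,z)$ correctly supports, is the right one, whereas the paper writes $-1\punt(\chi\punt(-1\punt\chi-\chi/d))^{\scriptscriptstyle <-1>}\equiv\mu$ --- an apparent sign slip, since the paper's own subsequent equality $E[\{x+n\punt(\chi\punt(-1\punt\chi-\chi/d))^{\scriptscriptstyle <-1>}\}^{j}]=E[(x+n\punt\mu)^{j}]$ also requires your version.
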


\begin{proof}
For $i=1, \ldots,n,$ let $X_i$ be a random variable with Bernoulli distribution of parameter $p.$ Let 
$\mu \equiv -1 \punt \chi \punt p \punt \beta$ be the umbra such that $f(\mu,z) = 1/(p e^{z} + (1 - p)),$ so the random walk 
$S_n = X_1 + X_2 + \cdots + X_n$ is umbrally represented by the family of auxiliary umbrae $\{n \punt (-1 \punt \mu)\}_{n \geq 0.}$  
From Theorem \ref{UTSH2}, the polynomials $Q_k(x,n)=E[(x - n \punt (-1 \punt \mu))^k] = E[(x + n \punt \mu)^k]$ are 
time-space harmonic with respect to $S_n$ for all nonnegative integers $k.$ 
From (\ref{kraw}), we have
\begin{equation*}
n \punt \chi + x \punt \left(-1 \punt \chi - \frac{\chi}{d}\right) \equiv \left( x + n \punt \left(\chi \punt 
\left(-1 \punt \chi - \frac{\chi}{d}\right)\right)^{\scriptscriptstyle <-1>}\right) \punt \beta \punt \left(\chi 
\punt \left(-1 \punt \chi - \frac{\chi}{d}\right)\right).
\end{equation*}
By applying (\ref{comp}), the Krawtchouk polynomials are
\begin{equation}
\frac{n!}{(n - k)!}\mathcal{K}_k (x, p, n) = \sum_{j = 1}^k E\left[\left\{ x + n \punt \left[\chi \punt 
\left(-1 \punt \chi - \frac{\chi}{d} \right)\right]^{\scriptscriptstyle <-1>}\right\}^j\right] 
B_{k,j}(m_1, \ldots, m_{k - j + 1}),
\label{kraw2}
\end{equation}
where $m_i = E[(\chi \punt (-1 \punt \chi - \chi/d)^i].$ 
Via generating function, it is straightforward to prove that $-1 \punt (\chi \punt (-1 \punt \chi - 
\chi/d ))^{\scriptscriptstyle <-1>} \equiv \mu,$ therefore $E[\{ x + n \punt (\chi \punt (-1 \punt \chi 
- \chi/d))^{\scriptscriptstyle <-1>}\}^j] = E[(x + n \punt \mu)^j] = Q_j(x,n).$ 
By replacing this result in (\ref{kraw2}), we have
\begin{equation*}
\frac{n!}{(n - k)!}\mathcal{K}_k (x, p, n) = \sum_{j = 1}^k Q_j(x,n) B_{k,j}(m_1, \ldots, m_{k - j + 1}),
\end{equation*}
and the result follows, thanks to Remark \ref{propcomb}.
\end{proof}
\noindent{\sl \textbf{Pseudo-Narumi polynomials}.} The family of pseudo-Narumi polynomials 
$\{N_k (x, a n)\},$ $a \in \mathbb{N},$ is the sequence of the coefficients of the following power series \cite{Roman}

\begin{equation}
 \sum_{k \geq 0} N_k (x, a n) z^k = \left(\frac{\log(1 + z)}{z}\right)^{an} (1 + z)^x.
\label{narumi_fg}
\end{equation}

From (\ref{narumi_fg}), the pseudo-Narumi polynomials result to be the moments of the umbra 
$x \punt \chi + (an) \punt u^{\scriptscriptstyle <-1>}_{\scriptscriptstyle P},$ where 
$u^{\scriptscriptstyle <-1>}_{\scriptscriptstyle P}$ is the primitive umbra of the compositional 
inverse $u^{\scriptscriptstyle <-1>}.$ We recall that, given an umbra $\alpha \in \A,$ the 
$\alpha$-\emph{primitive umbra} $\alpha_{\scriptscriptstyle P}$ is such that $f(\alpha_{\scriptscriptstyle P},z) 
= (f(\alpha,z) - 1)/z.$ For all nonnegative integers $k,$ we have 
\begin{equation}
k! N_k (x, a n) = E\{[(an) \punt u^{\scriptscriptstyle <-1>}_{\scriptscriptstyle P} + x \punt \chi]^k\}.
\label{kr}
\end{equation}

\begin{thm}
The pseudo-Narumi polynomials are time-space harmonic with respect to the random walk 
$\{(an). (-1 \punt \iota)\}_{n \geq 0}.$
\end{thm}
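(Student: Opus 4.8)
The plan is to follow the same template used for the actuarial, Meixner and Krawtchouk polynomials: first exhibit the ``base'' time-space harmonic family attached to the random walk through Theorem \ref{UTSH2}, then factor the pseudo-Narumi umbra so that the composition formula (\ref{comp}) rewrites $N_k(x,an)$ as a scalar linear combination of those base polynomials, and finally invoke Remark \ref{propcomb}.

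First I would set up the random walk. As in the proof of Theorem \ref{bernpol}, $-1 \punt \iota$ is the umbral counterpart of a random variable uniform on $[0,1]$, so $S_{an} = X_1 + \cdots + X_{an}$ with the $X_i$ i.i.d.\ uniform on $[0,1]$ is umbrally represented by $(an) \punt (-1 \punt \iota)$. Theorem \ref{UTSH2} then guarantees that $Q_k(x,an) = E[(x - (an) \punt (-1 \punt \iota))^k]$ are time-space harmonic with respect to this random walk, and exactly as in Theorem \ref{bernpol} the equivalence $(an) \punt (-1 \punt \iota) \equiv -(an) \punt \iota$ gives $Q_k(x,an) = E[(x + (an) \punt \iota)^k]$, namely the Bernoulli polynomials $B_k(x,an)$.

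The core of the argument is a factorization of the umbra in (\ref{kr}), namely
\begin{equation*}
(an) \punt u^{\scriptscriptstyle <-1>}_{\scriptscriptstyle P} + x \punt \chi \equiv (x + (an) \punt \iota) \punt \beta \punt u^{\scriptscriptstyle <-1>}.
\end{equation*}
To obtain it I would apply the distributive property $(\alpha + \gamma) \punt \vartheta \equiv \alpha \punt \vartheta + \gamma \punt \vartheta$ with $\vartheta \equiv \beta \punt u^{\scriptscriptstyle <-1>}$ and then check two generating-function identities. The first, $x \punt \chi \equiv x \punt \beta \punt u^{\scriptscriptstyle <-1>}$, holds because $f(\beta \punt u^{\scriptscriptstyle <-1>},z) = \exp\{f(u^{\scriptscriptstyle <-1>},z) - 1\} = 1 + z$, so raising to $x$ gives $(1+z)^x$. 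The second, $(an) \punt \iota \punt \beta \punt u^{\scriptscriptstyle <-1>} \equiv (an) \punt u^{\scriptscriptstyle <-1>}_{\scriptscriptstyle P}$, reduces to $\iota \punt \beta \punt u^{\scriptscriptstyle <-1>} \equiv u^{\scriptscriptstyle <-1>}_{\scriptscriptstyle P}$, which follows from $f(\iota, f(u^{\scriptscriptstyle <-1>},z) - 1) = f(\iota, \log(1+z)) = \log(1+z)/z = f(u^{\scriptscriptstyle <-1>}_{\scriptscriptstyle P},z)$. I expect this factorization to be the main obstacle, since it is precisely where the specific analytic identities for $\log(1+z)/z$ and for the Bernoulli umbra $\iota$ must be verified; everything else is bookkeeping.

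Once the factorization is in hand, the conclusion is routine. Applying (\ref{comp}) with outer umbra $x + (an) \punt \iota$ and inner umbra $u^{\scriptscriptstyle <-1>}$ gives
\begin{equation*}
k!\, N_k(x,an) = \sum_{j=1}^{k} E[(x + (an) \punt \iota)^j]\, B_{k,j}(m_1,\ldots,m_{k-j+1}) = \sum_{j=1}^{k} Q_j(x,an)\, B_{k,j}(m_1,\ldots,m_{k-j+1}),
\end{equation*}
where $m_i = E[(u^{\scriptscriptstyle <-1>})^i]$ are constants independent of $x$ and of $n$. This exhibits $N_k(x,an)$ as a scalar linear combination of the time-space harmonic polynomials $Q_j(x,an)$, so the result follows from Remark \ref{propcomb}.
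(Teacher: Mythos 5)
Your proof is correct and follows essentially the same route as the paper: the same base family $Q_k(x,n)=E[(x-(an)\punt(-1\punt\iota))^k]$, the same factorization of the pseudo-Narumi umbra through $\beta\punt u^{\scriptscriptstyle <-1>}$, the same application of (\ref{comp}) and Remark \ref{propcomb}. The only (cosmetic) difference is that you identify $\iota\punt\beta\punt u^{\scriptscriptstyle <-1>}\equiv u^{\scriptscriptstyle <-1>}_{\scriptscriptstyle P}$ before applying the composition formula, whereas the paper keeps $(an)\punt u^{\scriptscriptstyle <-1>}_{\scriptscriptstyle P}\punt\beta$ in the outer umbra and verifies $u^{\scriptscriptstyle <-1>}_{\scriptscriptstyle P}\punt\beta\equiv\iota$ at the end.
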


\begin{proof}
Consider the random walk $S_n = X_1 + X_2 + \cdots + X_n,$ where, for $i=1, \ldots,n,$ $X_i$ is a sum 
of $a \in \mathbb{N}$ random variables with uniform distribution on the interval $[0,1].$ Therefore, for $i = 1, 
\ldots, n,$ $X_i$ is umbrally represented by $a \punt (-1 \punt \iota)$ and $S_n$ is umbrally represented 
by $\{n \punt a \punt (-1 \punt \iota)\}_{n \geq 0}.$ By applying Theorem \ref{UTSH2}, it is straightforward 
to prove that $Q_k(x,n) = E[(x - (a n) \punt (-1 \punt \iota))^k]$ are time-space harmonic 
with respect to $S_n.$
On the other hand, $x \punt \chi + (an) \punt u^{\scriptscriptstyle <-1>}_{\scriptscriptstyle P} 
\equiv (x + (an) \punt u^{\scriptscriptstyle <-1>}_{\scriptscriptstyle P} \punt \beta) \punt \beta 
\punt u^{\scriptscriptstyle <-1>},$ and then, from (\ref{kr}), 
\begin{align*}
k! N_k (x, a n) = \sum_{j = 1}^k E[(x + (an) \punt u^{\scriptscriptstyle <-1>}_{\scriptscriptstyle P} 
\punt \beta)^j] B_{k,j}(m_1, \ldots, m_{k - j +1}),
\end{align*}
where $m_i = E[(u^{\scriptscriptstyle <-1>})^i].$ To prove the result, it is sufficient to show that 
$E[(x + (an) \punt u^{\scriptscriptstyle <-1>}_{\scriptscriptstyle P} \punt \beta)^j]$ fits 
with the $j$-th time-space harmonic polynomial $Q_j(x,n).$ Via generating function, we have 
$u^{\scriptscriptstyle <-1>}_{\scriptscriptstyle P} \punt \beta \equiv \iota,$ which gives
\begin{equation*}
k! N_k (x, a n) = \sum_{j = 1}^k Q_j(x,n) B_{k,j}(m_1, \ldots, m_{k - j +1}),
\end{equation*}
and the result follows, thanks to Remark \ref{propcomb}.
\end{proof}
%
\section{Acknowledgments}
The authors thank the referees for helpful comments and suggestions. 
%


\begin{thebibliography}{}
\bibitem[Applebaum, 2004]{Applebaum}
Applebaum, D. (2004). {\em L\'evy processes and stochastic calculus}. Cambridge university press.
\bibitem[Avram and Taqqu, 1986]{Taqqu}
Avram, F., Taqqu, M.S. (1986). Symmetric polynomials of random variables attracted to an infinitely divisible law. 
Probab. Th. Rel. Fields 27: 491--500.
\bibitem[Comtet, 1974]{Comtet}
Comtet, L. (1974). {\em Advanced combinatorics: the art of finite and infinite expansions}. D. Reidel Publishing 
Company.
\bibitem[Cuchiero {\it et al.}, 2008]{CKT}
Cuchiero, C., Keller-Ressel, M.,Teichmann, J. (2008). Polynomial processes and their applications to mathematical 
finance. Finance and Stochastics, 16 ({\bf 4}):711-740 
\bibitem[Di Nardo and Senato, 2001]{Dinsen}
Di Nardo, E. Senato, D. (2001). Umbral nature of the Poisson random variables. In: Crapo, H. Senato, D. eds., 
\emph{Algebraic Combinatorics and Computer science: a tribute to Gian-Carlo Rota} Springer-Verlag.
\bibitem[Di Nardo and Senato, 2006]{Dinardoeurop}
Di Nardo, E. Senato, D. (2006). An umbral setting for cumulants and factorial moments. European J. Combin. 27 
({\bf 3}):394--413.
\bibitem[Di Nardo {\it et al.}, 2008]{Bernoulli}
Di Nardo, E. Guarino, G. Senato, D. (2008). A unifying framework for $k$-statistics, polykays and their multivariate 
generalizations. Bernoulli 14 ({\bf 2}): 440--468.
\bibitem[Di Nardo {\it et al.}, 2011]{Sheffer}
Di Nardo, E. Niederhausen, H. Senato, D. (2011). A symbolic handling of Sheffer sequences. 
Ann. Mat. Pura Appl. 190: 489--506
\bibitem[Di Nardo and Oliva, 2009]{Dinoli}
Di Nardo, E. Oliva, I. (2009). On the computation of classical, boolean and free cumulants,
Appl. Math. Comp. 208 ({\bf 2}): 347--354.
\bibitem[Di Nardo and Oliva, 2013]{Dinoli2}
Di Nardo, E., Oliva, I. (2013) On a new representation of space-time harmonic polynomials with respect to L\'evy processes. Ann. Mat. Pura Appl. 
192, ({\bf 5}): 917-929.
\bibitem[Kailath and Segall, 1976]{KS}
Kailath, T. Segall, A. (1976). Orthogonal functionals of independent increment processes.
IEEE Trans. Inform. Theory IT-22 ({\bf 3}): 287--298.  
\bibitem[Roman, 1984]{Roman}
Roman, S. (1984). {\em The umbral calculus.} Academic press.
\bibitem[Rota, 1964]{Rota}
Rota, G.-C. (1964). The numbers of partition of a set. Amer. Math. Monthly 71 ({\bf 5}): 498--504.
\bibitem[Rota and Taylor, 1994]{SIAM}
Rota, G.-C. Taylor, B.D. (1994). The classical umbral calculus. SIAM J. Math. Anal. 25 ({\bf 2}): 
694--711.
\bibitem[Rota and Wallstrom, 1997]{Wall}
Rota, G.-C. Wallstrom, T. C. (1997). Stochastic integrals: a combinatorial approach. Ann. Probab. 25, 
({\bf 3}): 1257 - 1283.
\bibitem[Sato, 1999]{Sato}
Sato, K.-I. (1999). {\em L\'evy processes and infinitely divisible distributions.} Cambridge: University Press.
\bibitem[Schoutens, 2000]{Schoutens}
Schoutens, W. (2000). Stochastic Processes and Orthogonal Polynomials. Lecture Notes in Statistics 146. Springer-Verlag.
\bibitem[Sol\'e and Utzet, 2008]{Sole}
Sol\'e, J.L. Utzet, F. (2008). Time-space harmonic polynomials relative to a L\'evy process. Bernoulli 14 ({\bf 1}):
1--13.
\bibitem[Stanley, 1997]{Stanley}
Stanley, R.P. (1997). {\em Enumerative Combinatorics Vol.1}. Cambridge: University Press.
\end{thebibliography}
\end{document}